\newtheorem{theorem}{Theorem}[section]
\newtheorem{assumption}[theorem]{Assumption}
\newtheorem{proposition}[theorem]{Proposition}
\newtheorem{corollary}[theorem]{Corollary}
\theoremstyle{definition}
\newtheorem{remark}[theorem]{Remark}
\newtheorem{example}{Example}
\def\A{\mathcal A}
\def\C{\mathcal C}
\def\E{\mathbb E}
\def\F{\mathcal F}
\def\G{\mathcal G}
\def\P{\mathbb P}
\def\R{\mathbb R}
\def\X{\mathcal X}
\def\ud{\mathrm d}
\newcommand\bE{\mathbb{E}}
\newcommand\bF{\mathbb{F}}
\newcommand\bP{\mathbb{P}}
\newcommand{\ind}{\mathbbm{1}}
\newcommand{\kom}[1]{}
\renewcommand{\kom}[1]{{\bf [#1]}}
\definecolor{blau}{rgb}{0.1,0.0,0.9}
\newcounter{komcounter}
\numberwithin{komcounter}{section}
\title[Irreversible investment with learning-by-doing]{An irreversible investment problem with a learning-by-doing feature}
\author[E.\ Ekstr\"om, Y. Kitapbayev, A. Milazzo, T. Tolonen-Weckstr\"om]{Erik Ekstr\"om, Yerkin Kitapbayev, Alessandro Milazzo and Topias Tolonen-Weckstr\"om}
\keywords{Irreversible investment; incomplete information; learning-by-doing; singular control; free-boundary problems}
\thanks{{\em Mathematics Subject Classification 2020}: 93E11, 93E20, 60J60}
\address{E.\ Ekstr\"om and T.\ Tolonen-Weckstr\"om: Department of Mathematics, Uppsala University, Box 256, 75105 Uppsala, Sweden.}
\email{\href{mailto: ekstrom@math.uu.se}{ekstrom@math.uu.se}}
\email{\href{mailto: topias.tolonen@math.uu.se}{topias.tolonen@math.uu.se}}
\address{Y. Kitapbayev: Mathematics Department,
Khalifa University of Science and Technology, PO Box 127788, Abu Dhabi, United Arab Emirates.}
\email{\href{mailto: yerkin.kitapbayev@ku.ac.ae}{yerkin.kitapbayev@ku.ac.ae}}
\address{A. Milazzo: School of Management and Economics, Dept.~ESOMAS, University of Turin, C.so Unione Sovietica 218bis, 10134 Turin, Italy.}
\email{\href{mailto: alessandro.milazzo@unito.it}{alessandro.milazzo@unito.it}}
\date{\today}
\begin{document}

\begin{abstract}
We study a model of irreversible investment for a decision-maker who has the possibility to gradually invest in a project with unknown value. In this setting, we introduce and explore a feature of ``learning-by-doing'', where the learning rate of the unknown project value is increasing in the decision-maker's level of investment in the project.
We show that, under some conditions on the functional dependence of the learning rate on the level of investment (the ``signal-to-noise ratio''), the optimal strategy is to invest gradually in the project so that a two-dimensional sufficient statistic reflects below a monotone boundary. Moreover, this boundary is characterized as the solution of a differential problem. Finally, we also formulate and solve a discrete version of the problem, which mirrors and complements the continuous version.
\end{abstract}

\maketitle

\section{Introduction}\label{sec:Intro}
Consider a decision-maker who aims to invest optimally in a new project, but
who suffers from incomplete information and does not know
the true value of the project.
In the simplest Bayesian setting, we assume that the project value $\mu$ has a two-point prior distribution. Thus, it can take two values: $\mu_0<0$ representing a ``bad'' project and $\mu_1>0$ representing a ``good'' project. 
In addition to the prior knowledge of the distribution of $\mu$, the decision-maker has also access to a stream of noisy observations of
the unknown project value $\mu$, and can thus make further inference about its true value. 
Within this set-up with incomplete information, we consider a situation in which the actions of the 
decision-maker may affect the learning rate of the unknown value.
More specifically, we introduce and study 
a notion of {\bf learning-by-doing}: by investing more into the project (i.e., by {\bf doing}) the decision-maker can  improve the {\bf learning} rate of the true value of $\mu$. The decision to increase the level of investment is irreversible, however, and there is thus a natural trade-off between early investment to increase the learning rate and a more cautious strategy to avoid investing in a potentially bad project.

We model the above situation with a learning-by-doing feature by introducing an observation process $X=(X_t)_{t\geq 0}$ of the form\footnote{
Notice that the parameterization $\theta=\theta(\mu):=(\mu-\mu_1)/(\mu_1-\mu_0)$ in the drift of $X$ is chosen, without loss of generality, so that $\theta\in\{0,1\}$.}
\begin{equation}\label{eq:startX}
\ud X_t =\frac{\mu-\mu_0}{\mu_1-\mu_0}\rho(U_t)\ud t+\ud W_t,
\end{equation}
where $W=(W_t)_{t\geq 0}$ is a standard Brownian motion, $U=(U_t)_{t\geq 0}$ is a non-decreasing control process with $0\leq U\leq 1$ that describes the level of investment in the project, and $u\mapsto \rho(u)$ 
(the signal-to-noise ratio) is a given positive and increasing function (for a more precise description of $\rho$ and the set of admissible control processes, see Section~\ref{sec3} below). That is, the decision-maker observes the process $X$ and may choose to increase the level of investment $U$ at any time, thereby 
increasing the signal-to-noise ratio of the observation process and thus obtaining a 
faster estimation of the true value $\mu$ of the project.
In this setting, the objective for the decision-maker is 
to choose a non-decreasing control $U$ 
to maximize the expectation 
\begin{equation}
\label{formulation}
\bE\left[\int_0^\infty e^{-rt}\mu \,\ud U_t\right]
\end{equation}
of the accumulated discounted true value of future investments, where $r>0$ is a subjective discount rate. Note that 
the optimization of \eqref{formulation} over investment strategies, subject to 
the learning-by-doing feature as described in \eqref{eq:startX}, results in an intrinsic ``cost of learning'': the decision-maker naturally wants to improve his/her learning rate of $\mu$ by increasing the control $U$, 
but, by doing so, she may be investing in a bad project (with $\mu=\mu_0<0$).
Our problem formulation of irreversible investment with learning-by-doing thus describes an instance of the classical theme of {\bf exploration} vs. {\bf exploitation}.

Intuitively, the set-up with the feature of learning-by-doing described above can be motivated as follows. An ``outsider'' (an agent who is not invested at all) may have access to noisy observations of the true value $\mu$ of a certain 
project, for example by observing financial statements of companies that are currently operating in a similar line of business. With no -- or little -- involvement in the project, however, the outsider 
has to rely on publicly available information, and observations of the project value are rather noisy.
On the other hand, with a greater involvement in the project, as measured by the decision-maker's investment level, additional private information becomes available and more
precise inference of the intrinsic project value can be obtained.

For instance,
an improved learning rate is a natural ingredient in situations allowing for {\bf project expansion} (see Example \ref{ex:ProjExp} for a specific formulation). 
As an example, consider the renewable energy sector, where a firm invested in wind turbines or solar cell plants
may encounter uncertainties related to factors such as weather patterns and energy output, as well as wear and tear and maintenance cost. 
Consequently, the firm  has only access to a noisy stream of observations of the true project value.
By gradual project expansion, however, some of the uncertainty factors are observed with more precision thanks to a higher experimentation rate, which enables the firm to better assess the viability of future expansion.
A second example involves the launch of a new product or an existing product into a new market, where 
the true project value is not known due to uncertainties in, for example, production costs and  demand. Again, project expansion gives rise to a higher experimentation rate, which leads to less noisy observations of the project value.

The learning-by-doing mechanism may also be associated with the level of {\bf commitment}. 
For example, consider an agent who may invest in a new start-up, whose profitability is uncertain. 
By increasing the investment level, the agent shows commitment to the start-up and may to a larger extent gain access to board meetings and other events where more information is revealed, thereby reducing the level of noise in the observations of the project value. We also use this example to better illustrate our objective function~\eqref{formulation} and the role of discounting. Suppose the investor purchases additional shares of the start-up, whose intrinsic value $\mu$ takes two values $\mu_0<0$ (bad project) and $\mu_1>0$ (good project). For instance, for a start-up developing a cutting-edge technology, it is plausible that $\mu=\mu_1$ with high probability. If, at time $t>0$, the investor increases his/her investment by $\ud U_t$ then the (pure) value of that investment today is $e^{-rt}\,\ud U_t$. By additionally taking into account the intrinsic value of the start-up, the present true value of the investment is $\mu e^{-rt}\,\ud U_t$. Taking expectation -- since the investor cannot directly observe $\mu$ -- and aggregating the investments by integration naturally leads to our form of the objective function~\eqref{formulation}.

Note that in \eqref{formulation} above, no investment costs are included. This, however, is without loss of generality. Indeed, if a constant investment cost $C>0$ is included in the model, then the expected total discounted value of investment would be
\begin{equation}
    \label{investmentcost}
    \E\left[\int_0^\infty e^{-rt}( \mu-C)\,\ud U_t\right] .
    \end{equation}
Consequently, the optimization over controls $U$ of the expression in \eqref{investmentcost} is of the same type as in \eqref{formulation}, but with $\mu$ replaced by $\tilde\mu:=\mu-C$ (for a non-degenerate problem one then needs 
$\mu_0<C<\mu_1$).
Also note that $\mu$ is not directly observable, but represents the intrinsic value of the project. 
In many classical references on irreversible investment
(cf.\ \cite{DP}, \cite{MS}, and also \cite{GN} for a more recent contribution with incomplete information), 
the project value is inferred as the present value of cumulative future revenues from an investment; our set-up is more general with 
$\mu$ being an abstract intrinsic value, but we note that the setting also admits an interpretation as the present value of cumulative future revenues from an investment (cf.\ Example~\ref{subsection:cashflows}).


\subsection{Related literature}

Problems of irreversible investment have been widely studied in the literature on stochastic control, with early references provided by 
\cite{DP} and \cite{MS}, and more recent contributions including, among many others, 
\cite{bank2020modelling},
\cite{de2017optimal}, 
\cite{decamps2006irreversible} and
\cite{ferrari2015integral}.
Mathematically, the irreversible investment problem described in \eqref{eq:startX}--\eqref{formulation} (and formulated more precisely in Section~\ref{sec3} below) is a stochastic singular control problem under incomplete information, where the chosen control affects the learning rate. While stochastic control problems with incomplete information have been studied extensively 
(for early references, see \cite{L} for a problem of utility maximization, and \cite{DMV} for an investment timing decision), works involving control of the learning rate were previously more rare but have seen a growing number of contributions in recent years. 
Within statistics, a problem of change-point detection with a controllable learning rate has been studied in
\cite{DS} (for reversible controls) and \cite{EM2023} (for irreversible controls), and estimation problems with a controllable learning rate were considered in \cite{moscarini2001optimal}, \cite{EK} and \cite{campbell2025sequential}.
In literature on operations management, related questions of the trade-off between earning and learning have been studied in the context of dynamic pricing in models with demand uncertainty, see, e.g., \cite{HKZ}.
Within operations research, \cite{harrison2015investment} studies an investment problem similar to ours, but with the main difference that
investment does not affect the learning rate.
More specifically, an investor may at each instant in time choose between a finite set of learning rates, where a larger learning rate comes with a larger running cost of observation, and where the unknown return has a two-point distribution. Moreover, the investor may choose an investment time, at which the optimization ends.  For a related work, see also 
\cite{sunar2021competitive}. Some abstract stochastic control problems where controls affect the learning rate have been recently studied also in \cite{cohen2025optimal}, \cite{knochenhauer2024continuous}, and \cite{cox2025measure}.
Finally, our problem is also related to classical multi-armed bandit problems, where a chosen strategy 
affects both learning and earning; see e.g. \cite{GGW}.

We also remark that, with respect to most of the existing literature of stochastic control problems where the control affects the learning rate (see, e.g., \cite{DS}, \cite{EK}, \cite{EM2023}, \cite{campbell2025sequential}, \cite{cohen2025optimal} and \cite{knochenhauer2024continuous}), we do not fix any specific form of the dependence of the learning rate on the control (the so-called ``signal-to-noise ratio'' in our problem). Instead, we develop and study a more flexible set-up by allowing for an arbitrary signal-to-noise ratio and providing sufficient conditions that guarantee the existence of a solution, which we can explicitly describe.

Since reversible controls are considered in \cite{DS}, \cite{EK} and \cite{harrison2015investment}, the sufficient statistic in those studies consists merely of the conditional probability of one of the two possible states, and is thus one-dimensional. On the other hand, for irreversible controls (as in \cite{EM2023}, and in the current paper), the sufficient statistic consists of the conditional probability of one of the states together with the current value of the control, and is thus two-dimensional. 

The epithet ``learning-by-doing'' has been associated to various problems in the economics literature, mainly in settings where an experienced agent has a larger ability than a less experienced one; for a classical reference, see \cite{arrow1962economic}.
One may note that the notion of ``learning-by-doing'' as used in \cite{arrow1962economic} could alternatively be described as ``improving-by-doing'', whereas the notion of the present paper could 
alternatively be referred to as ``learning-faster-by-doing''. Indeed, in \cite{arrow1962economic} the {\em profitability} rate is larger for an experienced agent. 
In contrast, for us the investment level does not influence the project value $\mu$, but it affects instead the rate with which the project value is revealed to the agent.

\subsection{Preview}

The remainder of the paper is organized as follows. 
Section~\ref{sec3} offers a precise mathematical formulation of the problem, using a weak approach. In Section~\ref{sec4}, we provide heuristic reasoning to derive an ordinary differential equation (ODE) for a boundary, along which a candidate optimal strategy reflects the underlying sufficient statistic, and Section~\ref{sec5} discusses conditions under which the solution of the ODE is monotone increasing. 
In Section~\ref{sec6}, we provide a verification theorem, which guarantees that the obtained candidate strategy is indeed optimal. In Section~\ref{sec7}, we provide some specific examples for our model with the corresponding illustrations of the optimal boundary. Finally, in Section~\ref{sec8}, we analyze a discrete version of our problem, which corresponds to situations where the set of possible investment levels is discrete.

\section{Problem formulation}\label{sec3}

In the Introduction, it is implicitly understood that the control $U$ should be chosen based on available observations of the process $X$. On the other hand, 
the choice of a control $U$ affects the observation process $X$, cf.\ \eqref{eq:startX}.
Because of this circular interplay, special care is needed when describing the set of admissible controls. Problems of this type are well-suited for 
the ``weak formulation'' based on change of measures and the Girsanov theorem (see also \cite{EK} and, for a similar situation as we have with irreversible controls, \cite{EM2023}). This approach is introduced in the current section.

Let $(\Omega,\mathcal F,\P)$ be a complete probability space, supporting a standard Brownian motion $X$ and an independent Bernoulli random variable $\theta$ with 
\[\P(\theta=1)=\pi=1-\P(\theta=0), \qquad \pi\in(0,1).\]
Let $\mathbb F=(\F_t)_{t\geq 0}$ be the smallest right-continuous filtration to which the process $X$ is adapted, and 
$\mathbb G=(\G_t)_{t\geq 0}$ the smallest right-continuous filtration to which the pair $(X,\theta)$ is adapted.
Denote by $\A$ the collection of $\bF$-adapted, right-continuous, non-decreasing processes $U$ with values in $[0,1]$; for $u\in[0,1]$, denote by $\A_u$ the sub-collection of controls with initial value equal to $u$, i.e.,
\begin{equation}\label{A_u}
\A_u=\{U\in\A: U_{0-}=u\}.
\end{equation}
Let $\rho:[0,1]\to(0,\infty)$ be a given non-decreasing and bounded function. Then,
for any $U\in\A$ and $t\in[0,\infty)$, we can define a measure $\P^U_t\sim \P$ on $(\Omega,\G_t)$ by
\[\frac{\ud\P_t^U}{\ud\P}:=\exp\left\{\theta\int_0^t \rho(U_s)\,\ud X_s-\frac{\theta^2}{2}\int_0^t\rho^2(U_s)\,\ud s\right\}=:\eta^U_t.\]
Setting $\G_\infty:=\sigma (\cup_{0\leq t<\infty}\G_t)$, we may assume the existence of a probability measure $\P^U$ on $(\Omega, \G_\infty)$ that coincides with $\P^U_t$ on $\G_t$ (this can be guaranteed, e.g., by the theory of the so-called F\"{o}llmer measure, cf.\ \cite{follmer1972exit}).
By the Girsanov theorem, 
\[W^U_t:=X_t-\theta\int_0^t\rho(U_s)\,\ud s\]
is a $(\P^U,\mathbb G)$-Brownian motion, and consequently $X$ has the representation
\[X_t=\theta\int_0^t\rho(U_s)\,\ud s + W^U_t.\]
Note that this coincides with the dynamics of the observation process described in \eqref{eq:startX}, where $\theta=(\mu-\mu_0)/(\mu_1-\mu_0)$, and that the function $\rho(\cdot)$ is the signal-to-noise ratio of the problem.

It should be noticed that the law of $\theta$ remains the same under $\bP^U$ as under $\bP$. Indeed, denoting by $\bE^U$ the expectation under $\bP^U$, we have
$$\bP^U(\theta=1)=\bE^U[\ind_{\{\theta=1 \}}]=\bE[\ind_{\{\theta=1 \}}\eta^U_0]=\bP(\theta=1)=\pi,$$
where the second equality follows from the fact that $\theta$ is $\G_0$-measurable.

For any $U\in\A$, define the adjusted belief process
\[\Pi^U_t:=\P^U(\theta=1\vert \F_t), \qquad t\in[0,\infty).\]
By the innovations approach to stochastic filtering, the so-called {\em innovations process}
\[\hat{W}^U_t:=X_t-\int_0^t\rho(U_s)\Pi^U_s\,\ud s\]
is a $(\P^U,\mathbb F)$-Brownian motion, and (see, e.g., \cite[Theorem 8.1]{liptser1977statistics})
\begin{equation}\label{PiSDE}
\ud\Pi^U_t=\rho(U_t)\Pi^U_t(1-\Pi^U_t)\,\ud \hat{W}^U_t.
\end{equation}
Notice that, since $\rho$ is bounded by assumption, the SDE \eqref{PiSDE} admits a unique strong solution.


Recall the original problem formulation \eqref{formulation}. We note that conditioning yields 
\begin{align*}
   \bE^U\bigg[\int_0^\infty e^{-rt}\mu \,\ud U_t\bigg] &=\bE^U\bigg[\int_0^\infty e^{-rt}\bE^U\big[\mu\big|\F_t\big] \ud U_t\bigg]\\
   &=(\mu_1-\mu_0)\bE^U\bigg[\int_0^\infty e^{-rt}(\Pi^U_t-k)\,\ud U_t \bigg],
\end{align*}
where $k:=-\mu_0/(\mu_1-\mu_0)\in (0,1)$
and the integral is interpreted in the Riemann-Stieltjes sense over the interval $[0,\infty)$. 
We thus see that 
\[\sup_{U\in\A} \bE^U\bigg[\int_0^\infty e^{-rt}\mu \,\ud U_t\bigg]
= (\mu_1-\mu_0)\sup_{U\in\A} \bE^U\bigg[\int_0^\infty e^{-rt}(\Pi^U_t-k)\,\ud U_t \bigg],\]
and a strategy $U$ that is optimal on the left-hand side will also be optimal on the right-hand side, and vice versa. Therefore, in order to solve our original problem~\eqref{formulation}, we introduce the value function (leaving out the multiplicative factor $\mu_1-\mu_0$)
\begin{equation}\label{V}
V(u,\pi):=\sup_{U\in \A_u}\E^U_\pi\left[\int_0^\infty e^{-rt}(\Pi^U_t-k)\,\ud U_t\right], \qquad (u,\pi)\in[0,1]\times(0,1),
\end{equation}
where $k\in(0,1)$ and the sub-index $\pi$ denotes the expectation $\E^U_\pi[\cdot]:=\E^U[\cdot|\Pi^U_0=\pi]$.

\begin{remark}
    An alternative way to treat the circular dependence between the observation process and the controls 
    is through the ``strong formulation''. This is, for instance, the approach in \cite{cohen2025optimal, cox2025measure, knochenhauer2024continuous, campbell2025sequential}. In the strong formulation, one first fixes a Brownian motion $W$ and the filtration it generates, denoted by $\bF^W$. Then, the class of pre-admissible controls $\A^{\text{pre}}_u$ is defined as the collection of $\bF^W$-adapted, right-continuous, non-decreasing processes $U$ with values in $[0,1]$ and such that $U_{0-}=u$. For any $U\in\A^{\text{pre}}_u$, the observation process $X^U$ is the unique strong solution to the SDE
    $$\ud X^U_t=\theta\rho(U_t)\ud t+\ud W_t.$$
    Any such process observation process $X^U$ generates a filtration which is denoted by $\X^U$. The class of admissible controls in the strong formulation is, thus, defined as the set of pre-admissible controls that are also adapted with respect to the observation filtration, i.e.,
    $$\A_u:=\{U\in\A^{\text{pre}}_u: U \text{ is } \X^U\text{-adapted} \}.$$
    This formulation comes with a few issues to be taken care of. In particular, the set of admissible controls in the strong formulation is not closed under addition (see also \cite[Remark 2.6]{cohen2025optimal}).
    Moreover, notice that \cite{cohen2025optimal, cox2025measure, knochenhauer2024continuous, campbell2025sequential} consider ``classical'' controls, whereas our controls are of singular nature. 
\end{remark}

\section{Construction of a candidate solution}\label{sec4}

In this section we use heuristic arguments to construct a candidate value function $\hat V$ and a candidate optimal strategy $\hat U$ for the problem \eqref{V}.
Conditions under which $\hat U$ is optimal are then provided in Section~\ref{sec6} below, along with the equality $V=\hat V$.

It is intuitively clear that one should increase an optimal control $\hat U$ only if $\Pi^{\hat U}$ is large enough. 
Inspired by standard results in singular control, we will construct $\hat V$ using the Ansatz that there exists an non-decreasing boundary $\pi\mapsto h(\pi)$ and, for any initial point $(u,\pi)\in[0,1]\times(0,1)$, an optimal control that
satisfies\footnote{
The existence of a control $\hat U$ that satisfies equation \eqref{heur} is left for now. Note that one cannot simply see \eqref{heur} as a definition, since $\hat U$ appears on both sides of the equation; for a formal definition of $\hat U$, see \eqref{Uhat} below.}
\begin{equation}\label{heur}
\hat U_t=u\vee \sup_{0\leq s\leq t}h(\Pi^{\hat U}_s).
\end{equation}
That is, we postulate that the optimal investment $\hat U$ is gradually increased in such a way that the two-dimensional process $(\hat U,\Pi^{\hat U})$ reflects along the boundary $h$, with reflection in the $u$-direction (see Figure \ref{F:simulation}); if the initial point $(u,\pi)$ satisfies $u<h(\pi)$ then the construction results in an initial jump in the control of size $\ud \hat U_0=h(\pi)-u$.

For any $U\in\A_u$, by the dynamic programming principle, one expects the process 
\[M^U_t:=e^{-rt}\hat V( U_t,\Pi^{ U}_t) + \int^t_0e^{-rs}(\Pi^{ U}_s-k)\,\ud U_s\]
to be a $\P^{ U}$-supermartingale and a $\P^{\hat U}$-martingale when $U=\hat U$. By heuristically applying Itô's formula, the supermartingale condition translates into
\begin{equation}\label{supermg0}
\frac{\rho^2(u)}{2}\pi^2(1-\pi)^2 \hat V_{\pi\pi}-r\hat V \leq 0
\end{equation}
and 
\begin{equation}
\label{supermg}
\hat V_u+\pi-k\leq 0
\end{equation}
at all points $(u,\pi)\in[0,1]^2$, and the martingale condition translates into
\begin{equation}\label{mg}
\frac{\rho^2(u)}{2}\pi^2(1-\pi)^2 \hat V_{\pi\pi}-r\hat V =0
\end{equation}
in the {\em no-action region}
$\C:=\{(u,\pi):u>h(\pi)\}$
and 
\begin{equation}\label{bc}
\hat V_u+\pi-k = 0
\end{equation}
in $[0,1]^2\setminus \C$. The optimality conditions \eqref{supermg0}--\eqref{bc} can be used to derive an additional boundary condition along $\partial \C$. First if all,  
in view of \eqref{supermg} and \eqref{bc}, we should have 
$\hat V_{u\pi}(h(\pi),\pi)+1\geq 0$. 
Also, the Ansatz of an initial jump of size $h(\pi)-u$ for $u\leq h(\pi)$ implies that
\begin{equation}\label{eq:Vjump}
    \hat V(u,\pi)=\hat V(h(\pi),\pi)+(h(\pi)-u)(\pi-k)
\end{equation}
for $u\leq h(\pi)$. The latter together with \eqref{bc} gives
\begin{equation}\label{eq:Vpipi}
    \hat V_{\pi\pi}(u,\pi)=\hat V_{\pi\pi}(h(\pi),\pi) + (\hat V_{u\pi}(h(\pi),\pi)+1)h'(\pi).
\end{equation}
Since $h$ is increasing, by substituting \eqref{eq:Vpipi} and \eqref{eq:Vjump} in \eqref{supermg0} and using \eqref{mg} (which holds at $u=h(\pi)$ by continuity), one obtains that $\hat V_{u\pi}(h(\pi),\pi)+1\leq 0$,
and the additional boundary condition is thus
\begin{equation}\label{bc2}
\hat V_{u\pi}(h(\pi),\pi)+1=0.
\end{equation}
The latter condition on the mixed second derivative of the value function is a well-known recurring condition for two-dimensional singular control problems, see, e.g., \cite{merhi2007model}, \cite{EM2023}, and \cite{de2024maximality}.

Denoting by $b:=h^{-1}$ the inverse of $h$, we thus formulate the following free-boundary problem: find $(\hat V,b)$ such that 
\begin{equation}
\label{fbp}
\left\{\begin{array}{rl}
\frac{\rho^2(u)}{2}\pi^2(1-\pi)^2 \hat V_{\pi\pi}-r\hat V =0 & \pi <b(u)\\
\hat V_u=k-\pi & \pi=b(u)\\
\hat V_{u\pi} =-1 & \pi=b(u)\\
\hat V(u,0+)=0,\end{array}\right.
\end{equation}
where the last condition corresponds to no further investment in the case when the project value $\mu$ is known to be of the negative type.

\begin{figure}[h]
\centering
\includegraphics[scale=0.6]{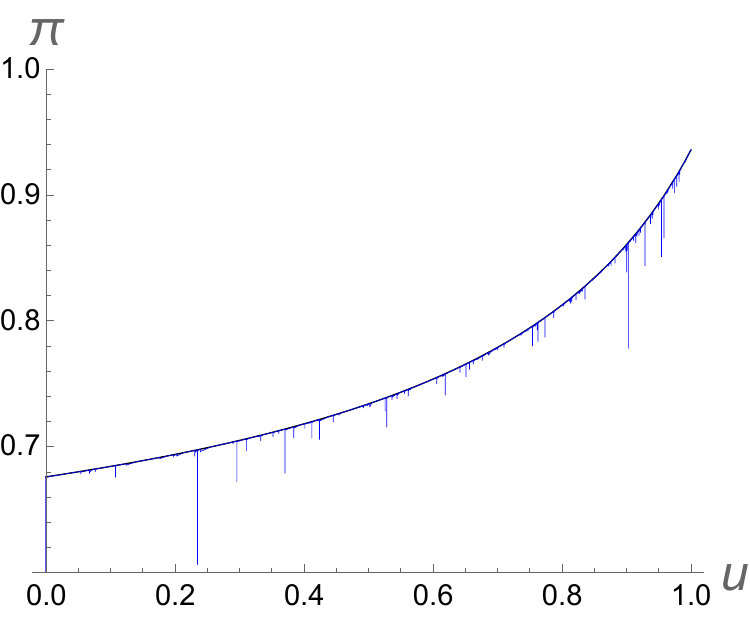}
\caption{The trajectory of the pair $(U,\Pi^U)$ under the reflecting strategy \eqref{heur} in the case 
$\rho^2(u)=\frac{1}{4(1-0.9u)}$, $k=0.5$ and $r = 0.1$.} 
\label{F:simulation}
\end{figure}

\subsection{Deriving an ODE for the free boundary}

The general solution of the ODE in \eqref{fbp} is 
\[\hat V(u,\pi)= A(u)(1-\pi)\left(\frac{\pi}{1-\pi}\right)^{\gamma(u)} + B(u)(1-\pi)\left(\frac{\pi}{1-\pi}\right)^{1-\gamma(u)} ,\]
where $A$ and $B$ are arbitrary functions and $\gamma(u)>1$ is the unique positive solution of the quadratic equation
\begin{equation}\label{gamma}
\gamma^2-\gamma-\frac{2r}{\rho^2(u)}=0.
\end{equation}
More explicitly,
\begin{equation}\label{eq:gamma}
    \gamma(u)=\frac{1}{2}+\sqrt{\frac{1}{4}+\frac{2r}{\rho^2(u)}}.
\end{equation}

Throughout Sections~\ref{sec4}--\ref{sec7} we work under the following assumption.

\begin{assumption}\label{assrho}
The signal-to-noise ratio $\rho:[0,1]\to(0,\infty)$
is twice continuously differentiable, with $\rho'(u)>0$ for every $u\in[0,1]$.
\end{assumption}

\begin{remark}
It is immediate to check that Assumption~\ref{assrho} 
implies that $\gamma:[0,1]\to(1,\infty)$ is twice continuously differentiable, with 
$\gamma'(u)<0$ for every $u\in[0,1]$.
\end{remark}

In view of the boundary condition at $\pi=0+$, we must have $B\equiv 0$ in the Ansatz above.
Introducing the function 
\begin{equation}\label{G}
G(u,\pi):=(1-\pi)\left(\frac{\pi}{1-\pi}\right)^{\gamma(u)},
\end{equation}
our Ansatz then takes the form
\[\hat V(u,\pi)=A(u)G(u,\pi)\] 
for $\pi\leq b(u)$. The two conditions at the boundary (i.e., $\hat V_u=k-\pi$ and $\hat V_{u\pi}=-1$) then become
\[\begin{pmatrix}
G_u(u,b(u)) &  G(u,b(u))\\
G_{u\pi}(u,b(u)) & G_\pi(u,b(u))
\end{pmatrix}\begin{pmatrix}
    A(u) \\
    A'(u)
\end{pmatrix} 
=  \begin{pmatrix}
 k- b(u)\\
 -1\end{pmatrix} ,\]
which yields
\[
\begin{pmatrix}
    A \\
    A'
\end{pmatrix} =
\frac{1}{G_uG_\pi-GG_{u\pi}}
\begin{pmatrix}
G_\pi &  -G\\
-G_{u\pi} & G_u
\end{pmatrix}\begin{pmatrix}
  k-b\\
 -1\end{pmatrix}
\]
(where the arguments of $A=A(u)$, $G=G(u,b(u))$, $b=b(u)$ and their derivatives are omitted).
A straightforward calculation leads to
\[G_uG_\pi-GG_{u\pi}=\frac{-\gamma'}{b(1-b)}G^2,\]
so
\begin{equation}\label{meqn}
\begin{pmatrix}
    A \\
    A' \end{pmatrix} =
\frac{b(1-b)}{\gamma' G^2}
\begin{pmatrix}
G_\pi &  -G\\
-G_{u\pi} & G_u
\end{pmatrix}\begin{pmatrix}
  b-k\\
 1\end{pmatrix}.
\end{equation}
Using 
\[G_\pi=\frac{\gamma-b}{b(1-b)}G,\]
the first equation in \eqref{meqn} simplifies to
\begin{equation}\label{A}
A= \frac{(\gamma+k-1)b-\gamma k}{\gamma'G}.
\end{equation}
Differentiation then gives 
\[A'=\frac{\gamma' ((\gamma+k-1) b'+\gamma' (b-k))G-((\gamma+k-1)b-\gamma k)(\gamma''G+ \gamma'G_u+\gamma'G_\pi b') }{(\gamma')^2 G^2}.\]
Comparing the last equation with the second equation in \eqref{meqn}, we find that 
\begin{eqnarray*}
    &&
\gamma' ((\gamma+k-1) b'+\gamma' (b-k))G-((\gamma+k-1)b-\gamma k)(\gamma''G+ \gamma'G_u+\gamma' b'G_\pi)\\
&&\hspace{15mm}=b(1-b)\gamma'(G_u-(b-k)G_{u\pi}),
\end{eqnarray*}
which simplifies to
\begin{equation}\label{ode}
b'(u)=F(u,b(u)),
\end{equation}
where 
\begin{equation}\label{F}
F(u,b):=\frac{2(b-k)(\gamma')^2+\left(\gamma k-(\gamma+k-1)b\right)\gamma''}
{-\gamma(\gamma k-(\gamma+k-1)b)-(\gamma-1)(1-k)b}\cdot\frac{b(1-b)}{\gamma'}.
\end{equation}

The ODE \eqref{ode} is of first order, and we need to additionally specify a boundary condition in order to find a unique candidate solution.
To find an appropriate boundary condition, we use a discretization
argument as follows. 
If the control $U$ is restricted to take values in a discrete set 
$\{\frac{k}{N}, k=0,...,N\}$ (this is the case studied more thoroughly in Section~\ref{sec8} below), then the control problem reduces to a recursive optimal stopping problem. In particular, 
when $u=(N-1)/N$, there is only one remaining exercise right, and
the decision-maker faces a problem 
\begin{equation}\label{ost}
\sup_{\tau\geq 0}\E^u_\pi\left[e^{-r\tau}(\Pi^u_\tau -k)\frac{1}{N}\right],
\end{equation}
where the super-index $u$ in $\mathbb P_\pi^u$ indicates that 
$\Pi^u$ is the conditional probability as in \eqref{PiSDE} but with 
a constant signal-to-noise ratio $\rho(\frac{N-1}{N})$.
This problem corresponds to an irreversible investment problem with incomplete information {\em without} the learning-by-doing feature (i.e., in which the signal-to-noise ratio $\rho(u)$ is constant), and it is reasonable to expect that the limit (as $u\to 1$) of the optimal boundary in the stopping problem \eqref{ost} coincides with $b(1-)$.
We will thus use the obtained value of the boundary at $u=1$ for the problem with constant learning rates as the boundary condition for \eqref{ode}.

\subsection{Constant learning rates} \label{subsecconstant}

In this section, we study the simpler version of our problem where the signal-to-noise ratio is constant (i.e., investing more does not provide an improvement in the learning rate) and the decision-maker can only choose the time when to fully invest in the project. More precisely, for any fixed $u\in[0,1]$, consider the stopping problem 
\begin{equation}\label{vstopping}
 v(\pi)=v(\pi;u):=\sup_{\tau\geq 0}\E^u_\pi\left[e^{-r\tau}(\Pi^u_\tau-k)\right], \qquad \pi\in(0,1),
\end{equation}
where the super-indices $u$ denote that 
the signal-to-noise ratio $\rho(u)\in(0,\infty)$ is constant, and the supremum is taken over $\bF$-stopping times. 

\begin{remark}
We have the relation $V(u,\pi)\geq (1-u)v(\pi;u)$, 
and the gap in the inequality represents the additional value that learning-by-doing provides in the problem formulation \eqref{V} compared to a case with a constant learning rate. 
\end{remark}

By standard methods of optimal stopping, one finds a candidate value function by solving the following free-boundary problem: construct 
$(\hat v,c)$ such that 
\begin{equation}
\label{fbp2}
\left\{\begin{array}{rl}
\frac{\rho^2(u)}{2}\pi^2(1-\pi)^2 \hat v_{\pi\pi}-r\hat v =0, & \pi <c(u)\\
\hat v=\pi-k, & \pi=c(u)\\
\hat v_{\pi} =1, & \pi=c(u)\\
\hat v(0+)=0.\end{array}\right.
\end{equation}
The general solution of the ODE in \eqref{fbp2}, combined with the boundary condition at $\pi=0+$, is given by
\[\hat v(\pi) = D(u)G(u,\pi),\]
where $G$ is as in \eqref{G} above and $D$ is an arbitrary function.
The two boundary conditions at $\pi=c(u)$ then yield 
\[\begin{cases}
  D(u)G(u,c(u))=c(u)-k\\
D(u)G_\pi(u,c(u))=1,  
\end{cases}
\]
and using 
\[G_\pi(u,\pi)=\frac{\gamma(u)-\pi}{\pi(1-\pi)}G(u,\pi)\]
we find that
\begin{equation}\label{c}
c(u)=\frac{k\gamma(u)}{k+\gamma(u)-1}.
\end{equation}
The candidate value function is thus given by
\begin{equation}\label{hatv}
\hat v(\pi)=\left\{\begin{array}{cl}
    \frac{c(u)-k}{G(u,c(u))}G(u,\pi),  & \pi<c(u)\\
\pi-k, & \pi\geq c(u),
\end{array}\right.
\end{equation}
with $c$ as in \eqref{c}.

Since $\hat v$ is convex, we have $\hat v\geq \pi-k$. Moreover, $c\geq k$, which implies that  
\[\frac{\rho^2(u)}{2}\pi^2(1-\pi)^2 \hat v_{\pi\pi}-r\hat v \leq 0\]
for $\pi\not= c$.
Using standard methods from optimal stopping theory
(see, e.g., \cite{PS}) the verification of $\hat v=v$ is then straightforward
and we omit the proof.

\begin{proposition}
    Let $v$ be the value function defined as in \eqref{vstopping}, and let $\hat v$ be defined as in \eqref{hatv}. Then $v=\hat v$, and moreover, the stopping time $\tau_c:=\inf\{t\geq 0:\Pi^u_t\geq c(u)\}$ is optimal for \eqref{vstopping}.
\end{proposition}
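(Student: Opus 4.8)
The plan is to run the classical verification argument for optimal stopping, using the variational properties of $\hat v$ already recorded above. Write $\mathcal{L}g(\pi):=\frac{\rho^2(u)}{2}\pi^2(1-\pi)^2 g''(\pi)$ for the generator of $\Pi^u$, so that by \eqref{PiSDE} specialised to the constant control $u$ one has $\ud\Pi^u_t=\rho(u)\Pi^u_t(1-\Pi^u_t)\,\ud\hat W^u_t$ under $\P^u$. From the construction \eqref{hatv} together with the smooth-fit conditions in \eqref{fbp2}, $\hat v$ is $C^1$ on $(0,1)$, is $C^2$ on $(0,1)\setminus\{c(u)\}$ with a bounded second derivative away from $c(u)$, is convex, satisfies $\hat v\ge \pi-k$, and obeys $\mathcal{L}\hat v-r\hat v\le 0$ for $\pi\ne c(u)$ with equality on $\{\pi<c(u)\}$. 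Finally, $\hat v$ is bounded on $[0,1]$, being equal to $\pi-k$ for $\pi\ge c(u)$ and tending to $0$ as $\pi\to 0+$, while $\hat v'$ is likewise bounded (it equals $1$ near $\pi=1$ and vanishes as $\pi\to 0+$ since $\gamma(u)>1$); these are the only ingredients needed.

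For the inequality $\hat v\ge v$, apply the generalised (Meyer--It\^o) formula to $e^{-rt}\hat v(\Pi^u_t)$. Since $\hat v\in C^1$ with absolutely continuous derivative --- the only irregularity being a bounded jump of $\hat v''$ at $c(u)$, which produces no singular local-time term precisely because of smooth fit --- I obtain, for a localising sequence $\tau_n$,
\[
e^{-r(t\wedge\tau_n)}\hat v(\Pi^u_{t\wedge\tau_n})=\hat v(\pi)+\int_0^{t\wedge\tau_n}e^{-rs}\big(\mathcal{L}\hat v-r\hat v\big)(\Pi^u_s)\,\ud s+\int_0^{t\wedge\tau_n}e^{-rs}\hat v'(\Pi^u_s)\rho(u)\Pi^u_s(1-\Pi^u_s)\,\ud\hat W^u_s .
\]
The drift integrand is nonpositive, so the left-hand side is a bounded local supermartingale and hence a true supermartingale (boundedness of $\hat v$ and $e^{-rt}\to 0$ let me pass to the limit $n\to\infty$ by dominated convergence). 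Optional sampling at an arbitrary stopping time $\tau$, combined with $\hat v\ge\pi-k$, then gives
\[
\hat v(\pi)\ge \E^u_\pi\big[e^{-r\tau}\hat v(\Pi^u_\tau)\big]\ge \E^u_\pi\big[e^{-r\tau}(\Pi^u_\tau-k)\big],
\]
where on $\{\tau=\infty\}$ the discounted payoff is read as $0$; taking the supremum over $\tau$ yields $\hat v\ge v$.

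For the reverse inequality and the optimality of $\tau_c$, I repeat the computation with $\tau=\tau_c$. On $\{\pi<c(u)\}$ the drift vanishes (the ODE in \eqref{fbp2} holds with equality), so $s\mapsto e^{-r(s\wedge\tau_c)}\hat v(\Pi^u_{s\wedge\tau_c})$ is a bounded local martingale, hence a martingale. By path-continuity of $\Pi^u$ one has $\Pi^u_{\tau_c}=c(u)$ on $\{\tau_c<\infty\}$, and $\hat v(c(u))=c(u)-k$, so
\[
\hat v(\pi)=\E^u_\pi\big[e^{-r\tau_c}\hat v(\Pi^u_{\tau_c})\big]=\E^u_\pi\big[e^{-r\tau_c}(\Pi^u_{\tau_c}-k)\big]\le v(\pi),
\]
again treating $\{\tau_c=\infty\}$ via dominated convergence and the convention that the discounted payoff vanishes there. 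Combining the two inequalities gives $\hat v=v$ and shows that $\tau_c$ attains the supremum in \eqref{vstopping}.

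The step requiring the most care is the passage from local to genuine (super)martingale on the infinite horizon and the validity of optional sampling at a $\tau$ possibly equal to $+\infty$. This is precisely where boundedness of $\hat v$ on $[0,1]$ and the strict discounting $e^{-rt}\to 0$ are essential, together with the observation that $\{0,1\}$ are natural, non-attainable boundaries for $\Pi^u$ (the diffusion coefficient $\rho(u)\pi(1-\pi)$ degenerates there), so that $\Pi^u$ stays in $(0,1)$ at all finite times and $\hat v(\Pi^u)$ is uniformly bounded along every trajectory.
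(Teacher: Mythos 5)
Your proposal is correct and is exactly the argument the paper has in mind: the paper establishes the two key facts ($\hat v$ convex, hence $\hat v\geq \pi-k$, and $c\geq k$, hence $\frac{\rho^2(u)}{2}\pi^2(1-\pi)^2\hat v_{\pi\pi}-r\hat v\leq 0$ off $\{c(u)\}$) and then explicitly omits the verification as ``standard,'' which is precisely the supermartingale/martingale argument you carry out. One cosmetic remark: your claim that $\hat v''$ is bounded away from $c(u)$ fails near $\pi=0$ when $\gamma(u)<2$ (there $\hat v''\sim \pi^{\gamma-2}$), but this is harmless since your localising sequence only requires local boundedness of $\hat v''$ on compact subsets of $(0,1)$, and the drift integrand $\frac{\rho^2(u)}{2}\pi^2(1-\pi)^2\hat v''-r\hat v$ remains bounded in any case.
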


\section{A study of the ODE for the boundary}
\label{sec5}

In this section we study the ODE \eqref{ode}.
In particular, we first show that, when paired with its boundary condition derived in the previous section, it has a unique 
solution. Moreover,
the heuristic derivation of \eqref{ode} uses the assumption that the boundary $b$ is monotone, 
so we also provide conditions under which 
the solution of the ODE is indeed monotone. 

Consider the differential problem
\begin{equation}
    \label{ode+bc}
    \begin{cases}
        b'(u)= F(u,b(u)), \quad u\in(0,1)\\
        b(1)= c(1),
    \end{cases}
\end{equation}
where we recall from \eqref{F} that 
\begin{equation}
\label{F2}
F(u,b)=\frac{2(b-k)(\gamma')^2+\left(\gamma k-(\gamma+k-1)b\right)\gamma''}
{-\gamma(\gamma k-(\gamma+k-1)b)-(\gamma-1)(1-k)b}\cdot\frac{b(1-b)}{\gamma'}
\end{equation}
and where the boundary condition at $u=1$ is given by \[b(1)=c(1)=\frac{k\gamma(1)}{k+\gamma(1)-1}>k,\] cf.~\eqref{c}.

\begin{proposition}\label{Propbexist}
The ODE \eqref{ode+bc} has a unique solution $b$ on $[0,1]$. Moreover,
\begin{equation}\label{bounds}
    0< b(u)< c(u)
\end{equation}
for $u\in[0,1)$.
\end{proposition}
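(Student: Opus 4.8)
The plan is to read \eqref{ode+bc} as a terminal-value problem and combine a Picard--Lindel\"of/continuation argument with a trapping argument that confines the solution between the constant solution $b\equiv 0$ and the curve $c$, thereby yielding \eqref{bounds} at the same time as global existence and uniqueness.

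First I would check that the right-hand side $F$ is well behaved on the closed region $R:=\{(u,b):u\in[0,1],\ 0\le b\le c(u)\}$. Writing $\phi(u,b):=-\gamma(\gamma k-(\gamma+k-1)b)-(\gamma-1)(1-k)b$ for the denominator in \eqref{F2}, note that $\phi$ is affine in $b$ and that
\[\phi(u,0)=-\gamma^2 k<0,\qquad \phi(u,c(u))=-(\gamma-1)(1-k)c(u)<0,\]
the latter because $\gamma k-(\gamma+k-1)c=0$ by the definition \eqref{c} of $c$, while $\gamma>1$, $k\in(0,1)$, $c>0$. By affineness $\phi<0$ on all of $R$, and by compactness $\phi\le-\delta<0$ there; since also $\gamma'<0$ is bounded away from $0$ and $0\le c<1$, the map $F$ is $C^1$, hence locally Lipschitz in $b$, on a neighbourhood of $R$. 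In particular $F$ is smooth near the terminal point $(1,c(1))$, so \eqref{ode+bc} has a unique local solution there.

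The crux is the algebraic identity
\[F(u,c(u))=2\,c'(u),\qquad u\in[0,1].\]
Indeed, at $b=c$ the factor $\gamma k-(\gamma+k-1)b$ vanishes, which removes the $\gamma''$ term in the numerator of \eqref{F2} and the first term of $\phi$; substituting $c-k=\tfrac{k(1-k)}{k+\gamma-1}$ and $1-c=\tfrac{(1-k)(\gamma-1)}{k+\gamma-1}$ and simplifying yields $F(u,c)=-2k(1-k)\gamma'/(k+\gamma-1)^2$, whereas differentiating \eqref{c} gives $c'=-k(1-k)\gamma'/(k+\gamma-1)^2$. Since $\gamma'<0$ this also shows $c'>0$, so $F(u,c(u))=2c'(u)>c'(u)>0$. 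Likewise, the factor $b(1-b)$ in \eqref{F2} forces $F(u,0)=0$ (the denominator being nonzero at $b=0$), so $b\equiv0$ is a solution of the ODE.

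These two facts trap the solution. For the lower bound, $b\equiv0$ solves the ODE and $F$ is Lipschitz near $b=0$, so by uniqueness a solution with $b(1)=c(1)>0$ can never reach $0$. For the upper bound, set $g:=c-b$, so $g(1)=0$ and $g'(1)=c'(1)-F(1,c(1))=-c'(1)<0$, whence $g>0$ just below $1$; more generally $g'=c'-F(u,c)=-c'<0$ at any point where $b=c$. Going backward from $u=1$, if $g$ vanished at some largest $u_0<1$ with $g>0$ on $(u_0,1)$, then $g'(u_0)\ge0$, contradicting $g'(u_0)=-c'(u_0)<0$; hence $0<b<c$ on $[0,1)$ (the endpoint $u=0$ is handled by the same two arguments applied one-sidedly). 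Since the solution is thereby confined to the compact set $R$, on which $F$ is bounded and Lipschitz, the standard continuation theorem extends it to all of $[0,1]$ and the Lipschitz property gives global uniqueness. I expect the main obstacle to be the bookkeeping behind the identity $F(u,c)=2c'$ together with the sign analysis of $\phi$; once these are in place, the trapping and continuation arguments are routine.
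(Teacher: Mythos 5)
Your proof is correct, and its essential mechanism is the same as the paper's: everything turns on the identity $F(u,c(u))=2c'(u)>c'(u)>0$, which both you and the paper use in an identical first-crossing argument to show that a solution with terminal value $b(1)=c(1)$ can never meet the curve $c$ on $[0,1)$. Where you differ is in the supporting scaffolding, in two places. For existence, the paper replaces $F$ by a globally Lipschitz modification $\tilde F$ that agrees with $F$ on $\mathcal{O}=\{(u,b):b\le c(u)\}$, solves the modified terminal-value problem on all of $[0,1]$ by Picard--Lindel\"of, and only afterwards checks that the solution stays in $\mathcal{O}$, where $\tilde F=F$; you instead combine local existence with the standard continuation/escape lemma, using the trapping region to rule out breakdown of the maximal solution before $u=0$. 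For the positivity bound, the paper uses the linear estimate $F(u,b)\le Db$ and a Gronwall-type comparison, giving $b(u)\ge c(1)e^{-D(1-u)}>0$, whereas you observe that $b\equiv 0$ is itself an exact solution of the ODE (the factor $b(1-b)$ in \eqref{F2} vanishes at $b=0$ while the denominator equals $-\gamma^2k\neq 0$) and conclude by uniqueness that the solution of \eqref{ode+bc} cannot touch it. Both substitutions are sound; your lower bound is arguably the cleaner of the two (no constant $D$ needs to be produced), while the paper's modification trick buys the right to speak of a single globally defined solution from the outset, without any discussion of maximal intervals. One minor imprecision worth fixing: $F$ need not be $C^1$ jointly, since Assumption \ref{assrho} only makes $\gamma''$ continuous; what your argument actually needs, and what does hold on a neighbourhood of your region $R$, is that $F$ is continuous in $(u,b)$ and locally Lipschitz in $b$ (its $b$-derivative exists and is jointly continuous there), which is exactly the hypothesis required by Picard--Lindel\"of and by your uniqueness steps.
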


\begin{proof}
First note that the denominator of $F$ is bounded away from 0 on the region 
\[\mathcal O:=\{(u,b)\in[0,1]^2:b\leq c(u)\}.\]
Let $\tilde F$ be a modification of $F$ which coincides with $F$ on $\mathcal O$, and is Lipschitz
continuous on $[0,1]\times\R$.
It then follows from an application of the Picard-Lindel\"of theorem the existence of a unique solution $\tilde b$ of 
\begin{equation}
    \begin{cases}
        \tilde b'(u)= \tilde F(u,\tilde b(u)), \quad u\in(0,1)\\
        \tilde b(1)= c(1).
    \end{cases}
\end{equation}

By straightforward differentiation, 
\[c'(u)=\frac{-k(1-k)\gamma'}{(\gamma+k-1)^2}\]
and 
\[F(u,c(u))= \frac{2(c-k)\gamma'(1-c)}
{-(\gamma-1)(1-k)} =2c'(u) >c'(u).\]
Consequently, for every $u\in[0,1]$,
\begin{equation}
    \label{der}
    F(u,c(u))>c'(u)>0. 
\end{equation}
Therefore, 
$\tilde b(u)\leq c(u)$ for all $u\in[0,1]$. 
Indeed, assuming that
\[u_0:=\sup\{u\in[0,1):\tilde b(u)=c(u)\}\geq 0,\]
we must have, by continuity, 
\[F(u_0,c(u_0))=F(u_0,\tilde b(u_0))=\tilde b'(u_0)\leq c'(u_0).\]
However, this contradicts \eqref{der}, which proves that 
$\tilde b(u)< c(u)$ for all $u\in[0,1)$. 

Similarly, $F(u,b)\leq Db$ for some constant $D>0$, so by comparison we find that $\tilde b(u)\geq \tilde b(1)e^{-D(1-u)}>0$. Since $(u,\tilde b(u))\in \mathcal O$, and since $\tilde F\equiv F$ on $\mathcal O$, the result follows.
\end{proof}

We next study monotonicity properties of $b$.
To do so, we need to investigate the sign of the 
function $F$ in \eqref{F2}. 
Recall, from Proposition \ref{Propbexist}, that $0< b(u)\leq c(u)$ for every $u\in[0,1]$. As a consequence,
$$-\gamma(\gamma k-(\gamma+k-1)b(u))-(\gamma-1)(1-k)b(u)<0$$
and so the sign of $b'(u)=F(u,b(u))$ coincides with the sign of the function
\begin{align}\label{eq:H}
    H(u,\pi):= & \: 2(\pi-k)(\gamma')^2+\left(\gamma k-(\gamma+k-1)\pi\right)\gamma''\\
    =& \: \left(2(\gamma')^2-\gamma''\gamma+(1-k)\gamma'' \right)\pi-\left(2(\gamma')^2-\gamma''\gamma\right)k\nonumber
\end{align}
evaluated at $(u,b(u))$.
In particular, $b'(u)>0$ if and only if $H(u,b(u))>0$. 

Note that $H$ is affine in $\pi$, with 
$H(u,c(u))=2(c(u)-k)(\gamma')^2>0$ and 
$H(u,0)=-(2(\gamma')^2-\gamma''\gamma)k$. 
Consequently, if $H(u,0)\geq 0$, then $b$ is automatically monotone increasing.

\begin{proposition}
Assume that, for every $u\in[0,1]$,
\begin{equation}
    \label{cond1}
2(\gamma'(u))^2-\gamma(u)\gamma''(u)\leq 0.
\end{equation}
Then, the solution $b$ of \eqref{ode+bc} satisfies $b'(u)>0$
for all $u\in[0,1]$.
\end{proposition}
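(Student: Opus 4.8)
The plan is to reduce the claim to a statement about the single affine function $\pi \mapsto H(u,\pi)$. The discussion preceding the proposition already establishes that, because the denominator of $F$ is negative on $\mathcal{O}$ and $\gamma'(u) < 0$, the sign of $b'(u) = F(u, b(u))$ coincides with the sign of $H(u, b(u))$. Thus the entire proof comes down to verifying that $H(u, b(u)) > 0$ for every $u \in [0,1]$, and I would do this purely from the two endpoint values of $H$ in the $\pi$-variable.

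First I would record the endpoint values, both of which are already computed in the text: $H(u, c(u)) = 2(c(u)-k)(\gamma'(u))^2 > 0$, using $c(u) > k$ from Proposition~\ref{Propbexist}, and $H(u,0) = -(2(\gamma')^2 - \gamma''\gamma)k$. The hypothesis \eqref{cond1} makes the bracket $2(\gamma')^2 - \gamma''\gamma$ nonpositive, so $H(u,0) \geq 0$. Since $H$ is affine in $\pi$, I would then write it as the exact linear interpolation between these two values,
\[
H(u,\pi) = \Big(1 - \tfrac{\pi}{c(u)}\Big) H(u,0) + \tfrac{\pi}{c(u)}\, H(u, c(u)),
\]
valid for $\pi \in [0, c(u)]$. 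For $\pi$ in the open interval $(0, c(u))$ both weights are strictly positive; the first term is $\geq 0$ and the second is $> 0$, so $H(u,\pi) > 0$.

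Finally I would locate $b(u)$ using Proposition~\ref{Propbexist}: for $u \in [0,1)$ we have $b(u) \in (0, c(u))$, so the interpolation bound gives $H(u, b(u)) > 0$; at $u = 1$ the boundary condition $b(1) = c(1)$ gives $H(1, b(1)) = H(1, c(1)) > 0$ directly. Combining with the sign correspondence yields $b'(u) > 0$ on all of $[0,1]$. I do not anticipate any real obstacle here: the one point requiring care is that \eqref{cond1} only delivers $H(u,0) \geq 0$, not strict positivity, so the strict inequality $H(u,b(u)) > 0$ must be sourced from the second interpolation term — which is legitimate precisely because $b(u) > 0$ (again from Proposition~\ref{Propbexist}) and $H(u,c(u)) > 0$. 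The more substantive work, namely pinning down the sign of $F$ through the negativity of its denominator and of $\gamma'$, has already been carried out in the preamble to the proposition.
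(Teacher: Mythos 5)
Your proposal is correct and follows essentially the same route as the paper: the paper's proof likewise reduces to the sign correspondence between $F$ and the affine function $H$, notes $H(u,0)\geq 0$ under \eqref{cond1} and $H(u,c(u))>0$, and concludes $F>0$ on $\{0<\pi<c(u)\}$, hence $b'>0$. Your version merely makes explicit the linear interpolation and the endpoint case $b(1)=c(1)$, which the paper leaves implicit.
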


\begin{proof}
If \eqref{cond1} holds, then $F(u,\pi)>0$ at all points
$(u,\pi)$ with $0<\pi<c(u)$. Consequently, $b'(u)>0$.
\end{proof}

\begin{remark}
    We note that \eqref{gamma} can be used to translate
    the condition \eqref{cond1} into a condition directly on the signal-to-noise ratio $\rho(u)$.
\end{remark}

Next, assume that $2(\gamma'(u))^2-\gamma(u)\gamma''(u)> 0$ for every $u\in[0,1]$ and define
\begin{equation}\label{B}
B(u):=\frac{2(\gamma'(u))^2-\gamma(u)\gamma''(u)}{2(\gamma'(u))^2-\gamma(u)\gamma''(u)+(1-k)\gamma''(u) }k, \qquad u\in[0,1].
\end{equation}
Then, $0<B(u)<c(u)$ and $H(u,B(u))=0$. 

\begin{proposition}\label{Propbmonot}
Assume that, for every $u\in [0,1]$,
\begin{equation}\label{cond2}
2(\gamma'(u))^2-\gamma(u)\gamma''(u)> 0,
\end{equation}
and $B'(u)>0$. Then, the solution $b$ of \eqref{ode+bc} satisfies $b'(u)>0$ and $b(u)>B(u)$ for all $u\in[0,1]$.
\end{proposition}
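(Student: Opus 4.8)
The plan is to reduce the monotonicity claim to showing that the solution $b$ stays strictly above the curve $B$ on all of $[0,1]$, and then to run a standard barrier (nullcline) argument anchored at the boundary point $u=1$. First I would record the geometric meaning of $B$. Recall from \eqref{eq:H} that $H(u,\cdot)$ is affine, that $H(u,B(u))=0$ by the definition \eqref{B}, and that $H(u,c(u))=2(c(u)-k)(\gamma')^2>0$. Since condition \eqref{cond2} guarantees $0<B(u)<c(u)$ (as noted just before the statement), an affine function vanishing at $B(u)$ and positive at the larger argument $c(u)$ must have positive slope in $\pi$; hence
\[
H(u,\pi)>0 \iff \pi>B(u).
\]
Because the sign of $b'(u)=F(u,b(u))$ agrees with that of $H(u,b(u))$, this yields the dichotomy $b'(u)>0\iff b(u)>B(u)$, while $b(u)=B(u)$ forces $b'(u)=0$. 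Consequently it suffices to prove $b(u)>B(u)$ for every $u\in[0,1]$, and the conclusion $b'(u)>0$ is then immediate.

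Next I would set up the barrier argument for $g:=b-B$, which is $C^1$ on $[0,1]$ since $b$ solves the ODE \eqref{ode+bc} and $B'$ exists by hypothesis. The boundary condition $b(1)=c(1)>B(1)$ gives $g(1)>0$. Suppose, for contradiction, that $g\le 0$ somewhere, and set $u_0:=\sup\{u\in[0,1]:g(u)\le 0\}$. Since $g(1)>0$ and $g$ is continuous, $u_0<1$ and the defining set is closed, so $g(u_0)\le 0$; moreover $g(u)>0$ for all $u\in(u_0,1]$, whence continuity gives $g(u_0)\ge 0$. Hence $g(u_0)=0$, i.e. $b(u_0)=B(u_0)$, which by the dichotomy above forces $b'(u_0)=F(u_0,b(u_0))=0$.

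The contradiction now comes from comparing one-sided derivatives at the contact point $u_0$. On one hand, $g(u_0)=0$ together with $g(u)>0$ on $(u_0,1]$ forces the right derivative to satisfy $g'(u_0)\ge 0$. On the other hand, using $b'(u_0)=0$,
\[
g'(u_0)=b'(u_0)-B'(u_0)=-B'(u_0)<0
\]
by the assumption $B'>0$. This contradiction shows $g>0$ throughout $[0,1]$, i.e. $b(u)>B(u)$, and therefore $b'(u)>0$ for every $u\in[0,1]$.

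The crux — and the only place where the hypothesis $B'>0$ enters — is the sign of $g'(u_0)$ at the contact point: a strictly increasing nullcline $B$ is exactly what makes it ``repelling from above'' as $u$ decreases, preventing $b$ from ever catching down to it. Two bookkeeping points I would double-check are that $B(u)<c(u)$ holds (so that the slope of $H$ in $\pi$ is correctly pinned down and $B$ is a genuine interior nullcline), and that the solution $b$ stays within the region $\mathcal O$ where $F$ is the unmodified drift; both are supplied by Proposition \ref{Propbexist}. No further estimates on $F$ are needed, since the argument is purely about the sign of $H$ and the one-sided derivative at $u_0$.
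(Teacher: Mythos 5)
Your proof is correct and follows essentially the same route as the paper: you establish the sign dichotomy $F(u,\pi)>0\iff\pi>B(u)$ (valid along the solution since $b$ stays in $\mathcal O$), reduce the claim to $b>B$, and derive a contradiction at the last contact point $u_0$ by comparing the one-sided derivative bound $g'(u_0)\ge 0$ with $b'(u_0)=F(u_0,B(u_0))=0<B'(u_0)$. The paper's proof is the same argument stated slightly more tersely (it takes $u_0=\sup\{u:b(u)=B(u)\}$ and directly notes $b'(u_0)\ge B'(u_0)$), so there is nothing substantive to add.
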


\begin{proof}
Under the condition \eqref{cond2} we have 
$H(u,0)<0$, and consequently
$F(u,\pi)>0$ if and only if $\pi>B(u)$.
Therefore, it suffices to show that $b(u)>B(u)$. 

Define 
    \[u_0:=\sup\{u\in[0,1]:b(u)=B(u)\}\]
    and assume, to reach a contradiction, that $u_0\geq 0$. Since $b(1)=c(1)>B(1)$, by continuity, 
    we must have $u_0<1$. Moreover, by the definition 
    of $u_0$, we must have $b'(u_0)\geq B'(u_0)$. However, this contradicts 
    \[b'(u_0)=F(u_0,b(u_0))=0<B'(u_0).\] 
    Consequently, $b(u)> B(u)$ for all $u\in[0,1]$ and so $b'(u)>0$.
\end{proof}


\begin{remark}
     We note that condition \eqref{cond1} requires that $\gamma$ is ``sufficiently'' convex, whereas condition \eqref{cond2} holds when $\gamma$ is either concave or ``mildly'' convex. This depends, from \eqref{eq:gamma}, on the form of the signal-to-noise ratio.
\end{remark}

Since it will be important to determinate whether $b(u)\geq k$ for every $u\in[0,1]$ (see Proposition \ref{ProphatV} and Theorem~\ref{main} below), we enunciate the following corollary.

\begin{corollary}\label{cor:gammaconc}    
Assume that $\gamma$ is concave and $B'>0$. Then, the solution $b$ of \eqref{ode+bc} satisfies $b'(u)>0$ and $b(u)>k$ for all $u\in[0,1]$.
\end{corollary}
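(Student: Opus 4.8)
The plan is to recognize this corollary as a direct specialization of Proposition~\ref{Propbmonot} combined with a short sign analysis of the auxiliary function $B$ from \eqref{B}. The first step is to check that concavity of $\gamma$ implies the hypothesis \eqref{cond2} needed to invoke Proposition~\ref{Propbmonot}. By the Remark following Assumption~\ref{assrho} we have $\gamma'(u)<0$, so $(\gamma'(u))^2>0$, and since $\gamma(u)>1>0$, concavity $\gamma''(u)\leq 0$ gives $-\gamma(u)\gamma''(u)\geq 0$; hence $2(\gamma'(u))^2-\gamma(u)\gamma''(u)>0$, which is exactly \eqref{cond2}. In particular $B$ is well defined through \eqref{B}. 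Together with the standing hypothesis $B'>0$, Proposition~\ref{Propbmonot} then applies verbatim and yields both the monotonicity claim $b'(u)>0$ and the lower bound $b(u)>B(u)$ for all $u\in[0,1]$. This already settles the first assertion and reduces the remaining work to proving $B(u)\geq k$.

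For the bound $b>k$, I would analyze the explicit expression \eqref{B} directly. Writing $B=(N/D)\,k$ with numerator $N:=2(\gamma')^2-\gamma\gamma''$ and denominator $D:=N+(1-k)\gamma''$, step one has already supplied $N>0$, and the discussion preceding Proposition~\ref{Propbmonot} records $0<B(u)<c(u)$. Once the sign of $D$ is secured to be positive, the inequality $B\geq k$ is equivalent to $N\geq D$, that is, to $(1-k)\gamma''\leq 0$; since $0<k<1$ and $\gamma$ is concave, this holds. Chaining the two bounds gives $b(u)>B(u)\geq k$, which is the desired strict inequality.

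The argument is essentially routine, so I do not expect a serious obstacle; the only point requiring a little care is justifying $D>0$. Estimating $D$ directly from its formula is not immediate, because in principle $(1-k)\gamma''$ could be large and negative. Rather than attempting such an estimate, I would exploit the already-established positivity $0<B(u)$: since $B=(N/D)k$ with $N>0$ and $k>0$, positivity of $B$ forces $D>0$. With the sign of $D$ thus obtained for free, the equivalence $B\geq k\Leftrightarrow\gamma''\leq 0$ is immediate, and the proof closes.
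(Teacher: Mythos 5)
Your proposal is correct and follows essentially the same route as the paper: invoke Proposition~\ref{Propbmonot} (after noting that concavity of $\gamma$ together with $\gamma'<0$ gives \eqref{cond2}) to obtain $b'>0$ and $b>B$, and then deduce $b>k$ from the comparison of $B$ with $k$. The paper simply asserts that concavity implies $B(u)>k$, whereas you supply the missing verification — including the sign of the denominator of \eqref{B} via the recorded positivity of $B$ — which is a faithful filling-in of detail rather than a different argument.
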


\begin{proof}
    The result directly follows from Proposition \ref{Propbmonot} and the fact that, if $\gamma$ is concave, then $B(u)>k$ for every $u\in[0,1]$.
\end{proof}

To guarantee the monotonicity of $B$ needed for Proposition \ref{Propbmonot} and Corollary \ref{cor:gammaconc}, we have the following simple result.

\begin{proposition}\label{prop:Bincreas}
    Assume that $\gamma$ is $C^3([0,1])$, that $\gamma''(u)<0$ for $u\in[0,1]$, and that $\gamma$ satisfies 
    \begin{equation}\label{cond gamma}
   3 (\gamma''(u))^2< 2\gamma'(u)\gamma'''(u)
    \end{equation}
    for all $u\in[0,1]$. 
    Then, $B$ in \eqref{B} is strictly increasing, i.e., $B'>0$.
\end{proposition}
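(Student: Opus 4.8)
The plan is to compute $B'$ directly and reduce the question of its sign to a single inequality that turns out to be precisely \eqref{cond gamma}. To keep the computation manageable I would abbreviate $B=k\,N/D$, where $N:=2(\gamma')^2-\gamma\gamma''$ is the numerator appearing in \eqref{B} and $D:=N+(1-k)\gamma''$ is its denominator. First I would record the elementary facts that make every quantity well-behaved: since $\gamma>1$ and $\gamma''<0$ by hypothesis, one has $N=2(\gamma')^2-\gamma\gamma''>0$, so condition \eqref{cond2} holds automatically; moreover $B$ is well-defined and strictly positive (as stated after \eqref{B}), which with $k,N>0$ forces $D>0$, and hence $D^2>0$ throughout $[0,1]$.

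Next I would differentiate the quotient. By the quotient rule $B'=k(N'D-ND')/D^2$, and since $D'=N'+(1-k)\gamma'''$, the numerator collapses, the $N'N$ terms cancelling, to
\[
N'D-ND'=(1-k)\bigl(N'\gamma''-N\gamma'''\bigr).
\]
Thus, because $k,\,1-k>0$ and $D^2>0$, the sign of $B'$ coincides with the sign of $N'\gamma''-N\gamma'''$. Computing $N'=3\gamma'\gamma''-\gamma\gamma'''$ and substituting, the two terms carrying the factor $\gamma\gamma''\gamma'''$ cancel, leaving the clean expression
\[
N'\gamma''-N\gamma'''=\gamma'\bigl(3(\gamma'')^2-2\gamma'\gamma'''\bigr).
\]

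Finally I would read off the sign. By the Remark following Assumption~\ref{assrho} we have $\gamma'<0$, while hypothesis \eqref{cond gamma} states exactly that $3(\gamma'')^2-2\gamma'\gamma'''<0$. The product of these two negative factors is positive, so $N'\gamma''-N\gamma'''>0$ and therefore $B'>0$ on $[0,1]$, as claimed. I do not anticipate any genuine obstacle beyond careful bookkeeping: the only delicate point is carrying out the two cancellations correctly (of $N'N$ in the quotient numerator, and of the $\gamma\gamma''\gamma'''$ terms in $N'\gamma''-N\gamma'''$), since it is exactly these cancellations that collapse the derivative into a single-sign expression matching the stated condition.
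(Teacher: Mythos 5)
Your proof is correct, and it takes a genuinely more direct route than the paper's. The algebra checks out: with $N=2(\gamma')^2-\gamma\gamma''$ and $D=N+(1-k)\gamma''$, one indeed has $N'D-ND'=(1-k)\bigl(N'\gamma''-N\gamma'''\bigr)$ and, after the second cancellation, $N'\gamma''-N\gamma'''=\gamma'\bigl(3(\gamma'')^2-2\gamma'\gamma'''\bigr)$, so that
\begin{equation}
B'=\frac{k(1-k)\,\gamma'\bigl(3(\gamma'')^2-2\gamma'\gamma'''\bigr)}{D^2}>0
\end{equation}
under \eqref{cond gamma} and $\gamma'<0$. The paper instead differentiates implicitly along the level curve: writing $B'=-H_u/H_\pi$ with $H$ as in \eqref{eq:H}, it notes $H_\pi>0$ when $\gamma''<0$, and simplifies $H_u(u,B(u))$ by substituting the relation $H(u,B(u))=0$, arriving at $H_u(u,B(u))=(B-k)\frac{\gamma'}{\gamma''}\bigl(3(\gamma'')^2-2\gamma'\gamma'''\bigr)<0$; that route requires the auxiliary fact $B\geq k$, which the paper extracts from concavity of $\gamma$. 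Your explicit quotient-rule computation bypasses both the substitution and the need for $B\geq k$: the sign falls out of the two cancellations alone, so it is marginally more self-contained. Of course the two arguments are the same derivative in disguise — your $D$ is exactly $H_\pi$, and your numerator identity is the explicit form of the paper's substitution step. One small tightening: rather than inferring $D>0$ somewhat circularly from the paper's (asserted but unproved) statement after \eqref{B} that $B>0$, observe directly that $D=2(\gamma')^2-(\gamma+k-1)\gamma''>0$, since $\gamma>1$, $k\in(0,1)$ and $\gamma''<0$.
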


\begin{proof}
From $H(u,B(u))=0$, we have
$$B'(u)=-\frac{H_u(u,B(u))}{H_{\pi}(u,B(u))}.$$
If $\gamma$ is concave, then $H_{\pi}(u,\pi)>0$ so it suffices to show that 
$H_u(u,B(u))<0$.
Differentiation yields 
\begin{align}
    H_u(u,B(u)) &= 3(B(u)-k)\gamma'\gamma'' +(\gamma k-(\gamma+k-1)B(u))\gamma'''\\
    &= 3(B(u)-k)\gamma'\gamma'' -2(B(u)-k)\frac{(\gamma')^2\gamma'''}{\gamma''}\\
    &= (B(u)-k)\frac{\gamma'}{\gamma''}(3(\gamma'')^2-2\gamma'\gamma''')<0,
\end{align}
where we used in the second equality that $H(u,B(u))=0$, and the inequality follows from \eqref{cond gamma} and the fact that $B\geq k$ since $\gamma$ is concave. It follows that $B'>0$.
\end{proof}

\begin{remark}
     Figure \ref{F:nonmonotone b} shows that the solution $b$ to the ODE \eqref{ode+bc} is not always monotone.
\end{remark}

\begin{figure}[ht]
\centering
\includegraphics[scale=0.6]{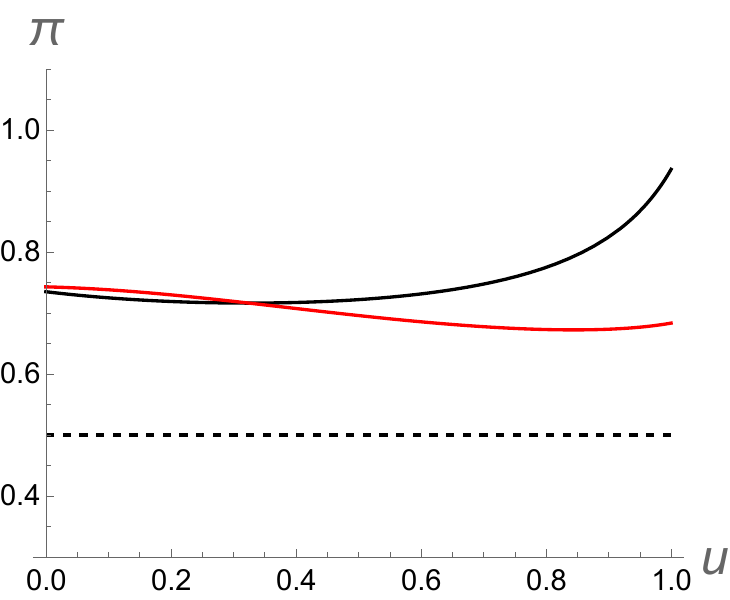}
\caption{The solution $b$ to the ODE \eqref{ode+bc} (solid black), the curve $B$ (red) and the threshold $k$ (dashed black), in the case $\rho^2(u)=\frac{1}{4 (1-0.1u-0.8u^2)}$, $k=0.5$ and $r = 0.1$.} 
\label{F:nonmonotone b}
\end{figure}

\section{Verification}
\label{sec6}
We now formally construct the candidate optimal strategy, heuristically introduced in \eqref{heur}, that performs reflection along the boundary $b$ (recall Figure \ref{F:simulation}). 
To do that, assume that the solution $b$ of  \eqref{ode+bc} is strictly increasing; sufficient conditions for this monotonicity were provided in Section~\ref{sec5} above. Recall that $h=b^{-1}$ denotes the inverse of $b$; it is defined on $[b(0),b(1)]$,
and we extend it to $(0,1)$ so that $h(\pi)=1$ for $\pi>b(1)$ and 
$h(\pi)=0$ for $\pi<b(0)$.

For any fixed $(u,\pi)\in[0,1]\times(0,1)$, we define the candidate optimal strategy $\hat U$, to perform reflection along $b$, as follows. Denote by $C([0,\infty))$ the space of continuous functions from $[0,\infty)$ to $[0,1]$ and define the map $\tilde U:[0,\infty)\times C([0,\infty))\to[0,1]$ by
\[\tilde U_t(\omega):= u\vee h\bigg(\sup_{0\leq s\leq t}\omega_s\bigg),\]
which will serve as the feed-back map of the optimal control. Now consider (cf.\ \eqref{PiSDE}) the stochastic differential equation (SDE)
\begin{equation}
    \label{P}
    \ud P_t=-\rho^2(\tilde U_t(P))P^2_t(1-P_t)\,\ud t + 
    \rho(\tilde U_t(P))P_t(1-P_t)\,\ud X_t,
\end{equation}
with $P_0=\pi$. The drift and diffusion coefficients of the SDE \eqref{P} satisfy the (locally) Lipschitz conditions of, e.g., \cite[Ch.~V, Th.~12.1]{rogers2000diffusions} and thus the SDE \eqref{P} admits a unique strong solution $P=(P_t)_{t\geq 0}$. Then, define the candidate optimal control by
\begin{equation}
    \label{Uhat}
   \hat{U}_{0-}=u \quad \text{and} \quad \hat U_t:= \tilde U_t(P), \quad t\geq 0.
\end{equation}
Since $P$ is $\bF$-adapted, we have that $\hat U\in\A_u$, as defined in \eqref{A_u}.
Recall that, by \eqref{PiSDE}, we also have 
\[\ud \Pi^{\hat U}_t=-\rho^2(\hat U_t)(\Pi^{\hat U}_t)^2(1-\Pi^{\hat U}_t)\,\ud t + 
    \rho(\hat U_t)\Pi^{\hat U}_t(1-\Pi^{\hat U}_t)\,\ud X_t
\]
i.e., $\Pi^{\hat U}$ satisfies \eqref{P} and so, by uniqueness, $\Pi^{\hat U}$ and $P$ are indistinguishable. Notice that this is in line with our conjecture in \eqref{heur}.

Next, we define the candidate value function $\hat V:[0,1]\times(0,1)\to\R$ by
\begin{equation}
    \label{hatV}
    \hat V(u,\pi):=\begin{cases}
        A(u)G(u,\pi), & \pi\leq b(u)\\
    A(h(\pi))G(h(\pi),\pi)+ (\pi-k)(h(\pi)-u), & \pi>b(u),
    \end{cases}
\end{equation}
where 
\[A(u)= \frac{(\gamma(u)+k-1)b(u)-\gamma(u) k}{\gamma'(u)G(u,b(u))},\]
(cf.~\eqref{A}) and $G$ is as in \eqref{G}.
In this way, $\hat V$ is continuous. We now show some further properties it satisfies, which are essential to obtain the Verification theorem.

\begin{proposition}\label{ProphatV}
Assume that $b$ is strictly increasing on $[0,1]$. 
We have that
    $$\hat V\in C^{1,2}([0,1]\times(0,1)),$$ 
    with $\hat V_u\leq k-\pi$. 
  Moreover, if $b(u)\geq k$ for every $u\in[0,1]$ or if \eqref{cond1} holds, then
    \begin{equation}\label{LV}
    \frac{\rho^2}{2}\pi^2(1-\pi)^2 \hat V_{\pi\pi}-r\hat V \leq 0.
    \end{equation}
\end{proposition}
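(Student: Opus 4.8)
The plan is to split $[0,1]\times(0,1)$ into the no-action region $\{\pi<b(u)\}$, where $\hat V=A(u)G(u,\pi)$, and the action region $\{\pi>b(u)\}=\{u<h(\pi)\}$, where $\hat V=A(h(\pi))G(h(\pi),\pi)+(\pi-k)(h(\pi)-u)$, and to treat regularity and the two inequalities in turn. In each open region $\hat V$ is manifestly smooth (since $\gamma\in C^2$, $b\in C^1$ with $b'>0$, so $h=b^{-1}\in C^1$, and $A$ is built from these), so everything comes down to matching $\hat V,\hat V_u,\hat V_\pi,\hat V_{\pi\pi}$ across $\{\pi=b(u)\}$. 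Continuity of $\hat V$ is immediate, as the term $(\pi-k)(h(\pi)-u)$ vanishes there. Differentiating the action-region expression and invoking the two free-boundary conditions $\hat V_u=k-\pi$ and $\hat V_{u\pi}=-1$ on $\{\pi=b(u)\}$ (the conditions that defined $A$ via \eqref{A} and the ODE \eqref{ode}), I expect the $h'$-terms to cancel, leaving in the action region $\hat V_u=k-\pi$, $\hat V_\pi=A(h(\pi))G_\pi(h(\pi),\pi)+(h(\pi)-u)$ and $\hat V_{\pi\pi}=A(h(\pi))G_{\pi\pi}(h(\pi),\pi)$, each of which coincides with its no-action counterpart at $\pi=b(u)$. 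In short, the first boundary condition produces the $C^1$-fit in $u$ and the second the $C^2$-fit in $\pi$.

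For the gradient inequality $\hat V_u\le k-\pi$, note that equality holds throughout the action region. In the no-action region I would fix $u$ and study $W(\pi):=\hat V_u(u,\pi)-(k-\pi)=G(u,\pi)\big(A'(u)+A(u)\gamma'(u)L(\pi)\big)+\pi-k$, with $L(\pi)=\ln(\pi/(1-\pi))$. The boundary conditions give $W(b(u))=0$ and $W_\pi(b(u))=\hat V_{u\pi}(u,b(u))+1=0$, a double contact, while $G(u,\pi)\to0$ polynomially and $L(\pi)\to-\infty$ only logarithmically as $\pi\to0$, so $W(0+)=-k<0$. Since $A>0$ and $\gamma'<0$, the factor $A'+A\gamma'L$ is strictly decreasing in $\pi$; I would use this monotone structure together with the double contact to conclude $W\le0$ on $(0,b(u)]$, concretely by showing that $W$ is non-decreasing there (equivalently $\hat V_{u\pi}\ge-1$), which is consistent with $W'(0+)=1>0$ and $W'(b(u))=0$.

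For the PDE inequality \eqref{LV}, equality holds in the no-action region by the defining ODE in \eqref{fbp}. In the action region, writing $v:=h(\pi)$ (so that $\pi=b(v)$) and inserting $\hat V_{\pi\pi}=A(v)G_{\pi\pi}(v,\pi)$, I would use that $G(v,\cdot)$ solves $\tfrac{\rho^2(v)}{2}\pi^2(1-\pi)^2G_{\pi\pi}=rG$ and that $A(v)G(v,\pi)=A(v)G(v,b(v))=R(v):=\frac{(\gamma(v)+k-1)b(v)-\gamma(v)k}{\gamma'(v)}>0$ to collapse the left-hand side to
\[\frac{\rho^2(u)}{2}\pi^2(1-\pi)^2\hat V_{\pi\pi}-r\hat V=rR(v)\left(\frac{\rho^2(u)}{\rho^2(v)}-1\right)+r(k-\pi)(v-u).\]
As $u<v$ and $\rho$ is increasing, the first term is $\le0$. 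If $b\ge k$ everywhere, then $\pi=b(v)\ge k$ in the action region, the second term is also $\le0$, and \eqref{LV} follows immediately. Under \eqref{cond1}, where $b$ may dip below $k$, the second term can be positive and I must show the negative first term dominates: for fixed $v$ this means proving $\Phi(u):=rR(v)(\rho^2(u)/\rho^2(v)-1)+r(k-b(v))(v-u)$ stays $\le0=\Phi(v)$ on $[0,v]$. Using $\rho^2=2r/\Gamma$ with $\Gamma:=\gamma(\gamma-1)$, one finds $\rho^2(u)/\rho^2(v)=\Gamma(v)/\Gamma(u)$, and the sign of $\Phi''$ is governed by $2(\Gamma')^2-\Gamma\Gamma''$, a quantity controlled by the convexity hypothesis \eqref{cond1} on $\gamma$.

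I expect this last step to be the main obstacle: the reduction to the displayed formula is clean and the case $b\ge k$ is immediate, but extracting from \eqref{cond1} the precise convexity (or monotonicity) of $\Phi$ that forces $\Phi\le0$ for all $u\le v$ requires careful one-variable estimates on $\Gamma$ and $R$, and this is where the bulk of the analysis lies. The gradient inequality of the second paragraph is a secondary technical point of the same flavour, relying on the explicit one-dimensional behaviour of $W$ rather than on any structural shortcut.
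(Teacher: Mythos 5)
Your regularity argument and the case $b\ge k$ of \eqref{LV} are correct and follow the paper's proof essentially verbatim (the $h'$-cancellations via the two boundary conditions, and the collapse of the left-hand side of \eqref{LV} in the action region to $rR(v)\bigl(\rho^2(u)/\rho^2(v)-1\bigr)+r(k-\pi)(v-u)$, are exactly the paper's computations). However, the two steps you defer are where the content lies, and in the crucial one your proposed mechanism does not just have a hole — it points in the wrong direction. For the case \eqref{cond1} of \eqref{LV}, you propose to control $\Phi$ through the sign of $\Phi''$, governed by $2(\Gamma')^2-\Gamma\Gamma''$ with $\Gamma=\gamma(\gamma-1)$. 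But \eqref{cond1} does not make this quantity nonpositive: expanding,
\[
2(\Gamma')^2-\Gamma\Gamma''=2(\gamma')^2\bigl(3\gamma^2-3\gamma+1\bigr)-\gamma(\gamma-1)(2\gamma-1)\gamma'',
\]
and at the borderline of \eqref{cond1}, i.e.\ $\gamma\gamma''=2(\gamma')^2$ (which holds with equality for $\gamma(u)=1.25/(u+0.2)$, the paper's own example where $b$ dips below $k$), this equals $2\gamma^2(\gamma')^2>0$. So precisely in the regime the hypothesis \eqref{cond1} is designed for, $\Phi$ is strictly \emph{convex}, no concavity argument can exist, and convexity together with $\Phi(v)=0$ only yields $\Phi\le\max(\Phi(0),0)$, leaving an unavailable estimate at $u=0$. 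The missing idea is a preliminary relaxation: since $\gamma(u)\ge\gamma(v)>1$ for $u\le v$, one has $\rho^2(u)/\rho^2(v)=\frac{\gamma(v)(\gamma(v)-1)}{\gamma(u)(\gamma(u)-1)}\le\frac{\gamma(v)}{\gamma(u)}$, and since $R(v)\ge 0$ it suffices to show $f(u):=R(v)\bigl(\gamma(v)/\gamma(u)-1\bigr)+(\pi-k)(u-v)\le 0$ on $[0,v]$. The second derivative of $f$ is proportional to $\bigl(2(\gamma')^2-\gamma\gamma''\bigr)/\gamma^3$, so $f$ is concave \emph{exactly} under \eqref{cond1}; combined with $f(v)=0$ and $f'(v)=(1-k)\pi/\gamma(v)>0$, concavity gives $f\le 0$ on $[0,v]$. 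This substitution of $\gamma$ for $\Gamma$ is what makes \eqref{cond1} the right hypothesis, and it is absent from your plan.

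The gradient inequality has a smaller but genuine gap of the same kind: your stated target, that $W(\pi)=\hat V_u(u,\pi)+\pi-k$ is non-decreasing on all of $(0,b(u)]$, is never proved (endpoint consistency is not a proof) and is strictly stronger than what is needed; it amounts to $\hat V_{u\pi}\ge -1$ throughout the no-action region, which does not follow from the monotone structure you list. What that structure does yield — using $A>0$, $\gamma'<0$, $G(u,\cdot)$ increasing, and $\gamma'(u)A(u)G(u,b(u))=(\gamma+k-1)b-\gamma k<0$ from \eqref{A} and $b<c$ — is the one-sided implication: at any point where $W>0$,
\[
\pi(1-\pi)W'(\pi)=(\gamma-\pi)\hat V_u+\gamma'AG+\pi(1-\pi)>(\gamma-\pi)(k-\pi)+(\gamma+k-1)b-\gamma k+\pi(1-\pi)=(\gamma+k-1)(b-\pi)>0 .
\]
A first-crossing argument then finishes: if $W(\pi_0)>0$ for some $\pi_0<b(u)$, then $W$ stays positive and increasing on $[\pi_0,b(u)]$, contradicting $W(b(u))=0$. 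This is the paper's argument; replacing your global monotonicity claim by it closes the gap with the tools you already have on the table.
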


\begin{proof}
First, we study differentiability of $\hat V$. It is clear that $\hat V$ is of class $C^1$ below the boundary. Moreover,  
$\hat V_u(u,b(u)-)=k-b(u)$ by construction (recall \eqref{fbp}), and since $\hat V$ is extended linearly in $u$ with slope $k-\pi$ for $\pi>b(u)$, it follows that $\hat V_u$ is continuous.
More precisely, for $(u,\pi)$ with $b(u)<\pi$, we have
$\hat V(u,\pi)=\hat V(u_0,\pi)+(\pi-k)(u_0-u)$ where $u_0:=h(\pi)$, and so
\begin{equation}\label{piderivative}
\hat V_\pi(u,\pi)=\hat V_{\pi}(u_0,\pi)+u_0-u
\end{equation}
since $\hat V_u(u_0,\pi)=k-\pi$. Thus, $\hat V\in C^1([0,1]\times(0,1))$.

We now check that $\hat V_u\leq k-\pi$. We clearly have $\hat V_u= k-\pi$ above the boundary, so it remains to treat points below the boundary. 
For $\pi<b(u)$, we have
$$\hat V_u(u,\pi)=A'(u) G(u,\pi)+A(u) G_u(u,\pi)$$
and
$$\hat{V}_{u\pi}(u,\pi)=A'(u)G_{\pi}(u,\pi)+A(u)G_{u\pi}(u,\pi).$$
Since
$$G_{\pi}(u,\pi)=\frac{\gamma(u)-\pi}{\pi(1-\pi)}G(u,\pi)\quad \text{and} \quad G_{u\pi}(u,\pi)=\frac{\gamma(u)-\pi}{\pi(1-\pi)}G_u(u,\pi)+\frac{\gamma'(u)}{\pi(1-\pi)}G(u,\pi),$$
we obtain that
\begin{equation}\label{eq:V_upi}
    \hat{V}_{u\pi}(u,\pi)=\frac{\gamma(u)-\pi}{\pi(1-\pi)}\hat{V}_u(u,\pi)+\frac{\gamma'(u)}{\pi(1-\pi)}A(u)G(u,\pi).
\end{equation}

Now assume, to reach a contradiction, that there exists $(u,\pi_0)\in[0,1]\times(0,1)$ with $\pi_0<b(u)$ such that $\hat{V}_u(u,\pi_0)>k-\pi_0$. We then obtain from \eqref{eq:V_upi} and by \eqref{A} that
\begin{align*}
    \hat{V}_{u\pi}(u,\pi_0)&>\frac{\gamma(u)-\pi_0}{\pi_0(1-\pi_0)}(k-\pi_0)+\frac{(\gamma(u)+k-1)b(u)-k\gamma(u)}{\pi_0(1-\pi_0)}\frac{G(u,\pi_0)}{G(u,b(u))}\\
    &\geq \frac{\gamma(u)-\pi_0}{\pi_0(1-\pi_0)}(k-\pi_0)+\frac{(\gamma(u)+k-1)b(u)-k\gamma(u)}{\pi_0(1-\pi_0)}\\
    &=\frac{(\gamma(u)+k-1)(b(u)-\pi_0)}{\pi_0(1-\pi_0)}-1>-1.
\end{align*}
Consequently, $\pi\mapsto\hat V_{u}(u,\pi)+\pi-k$ is positive and increasing 
on $(\pi_0,b(u))$, which contradicts $\hat V_u(u,b(u))=k-b(u)$. It follows that $\hat{V}_u\leq k-\pi$ everywhere.

Differentiating \eqref{piderivative} once more with respect to $\pi$ and using $\hat V_{u\pi}=-1$ along the boundary (recall \eqref{fbp}), yields
\[\hat V_{\pi\pi}(u,\pi)=\hat V_{\pi\pi}(u_0,\pi),\]
which shows that $\hat V_{\pi\pi}$ is continuous. 

Finally, \eqref{LV} holds with equality below the boundary by construction, and above the boundary we have 
\begin{align*}
\frac{\rho^2(u)}{2}&\pi^2(1-\pi)^2 \hat V_{\pi\pi}(u,\pi)-r\hat V(u,\pi) \\
&= \frac{\rho^2(u)}{2}\pi^2(1-\pi)^2 \hat V_{\pi\pi}(u_0,\pi)-r(\hat V(u_0,\pi) +(\pi-k)(u_0-u))
\\
&= \left(\frac{\rho^2(u)}{\rho^2(u_0)}-1\right)rA(u_0)G(u_0,\pi)-r(\pi-k)(u_0-u).
\end{align*}
Thus, if $\pi\geq b(u)\geq k$, then both terms are negative, and 
\eqref{LV} follows.

Similarly, if \eqref{cond1} holds, using the expression 
\eqref{A} for $A$, we need to check that 
\begin{align*}
&\left(\frac{\rho^2(u)}{\rho^2(u_0)}-1\right)\frac{\gamma_0 k-(\gamma_0+k-1)\pi}{-\gamma'_0}+(\pi-k)(u-u_0)\\
&= \left(\frac{\gamma^2_0-\gamma_0}{\gamma^2(u)-\gamma(u)}-1\right)\frac{\gamma_0 k-(\gamma_0+k-1)\pi}{-\gamma'_0}+(\pi-k)(u-u_0)
\leq 0,
\end{align*}
where $\gamma_0:=\gamma(u_0)$ and 
$\gamma'_0:=\gamma'(u_0)$. Since $\gamma>\gamma_0$, we have
\[\frac{\gamma^2_0-\gamma_0}{\gamma^2-\gamma}\leq
\frac{\gamma_0}{\gamma},\]
so it then suffices to show that
\[f(u):=\left(\frac{\gamma_0}{\gamma(u)}-1\right)\frac{\gamma_0 k-(\gamma_0+k-1)\pi}{-\gamma'_0}+(\pi-k)(u-u_0)\leq 0.\]
However, it is clear that $f(u_0)=0$; also, 
\[f'(u_0)=\frac{(1-k)\pi}{\gamma_0}>0\]
and $f$ is concave by \eqref{cond1} and the fact that $\pi=b(u_0)\leq c(u_0)$.
Consequently, 
$f(u)\leq 0$ for $u\leq u_0$, and \eqref{LV} holds.
\end{proof}

The results of Proposition \ref{ProphatV} lead to the Verification theorem, which we now present.

\begin{theorem}\label{main}
Let $b$ be the solution of the differential problem \eqref{ode+bc}. Assume that $b$ is strictly increasing and either $b(u)\geq k$ for every $u\in [0,1]$ or \eqref{cond1} holds. Then, 
$V=\hat V$ and the strategy $\hat U$ is optimal in \eqref{V}.
\end{theorem}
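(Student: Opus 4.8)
The plan is to prove the verification theorem via a standard martingale-type argument, establishing two inequalities: $V \leq \hat V$ (the supermartingale/upper bound) and $V \geq \hat V$ (optimality of $\hat U$). By Proposition \ref{ProphatV}, under the stated hypotheses $\hat V \in C^{1,2}$, satisfies $\hat V_u \leq k - \pi$ everywhere, and obeys the variational inequality \eqref{LV}; these are precisely the ingredients needed.

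\textbf{Upper bound.} For an arbitrary admissible control $U \in \A_u$, I would apply the It\^o--Tanaka/change-of-variable formula to the process $e^{-rt}\hat V(U_t, \Pi^U_t)$ under $\P^U$, using the SDE \eqref{PiSDE} for $\Pi^U$. Splitting $\ud U_t$ into its continuous part and jumps, the $\pi$-martingale term from $\rho(U_t)\Pi^U_t(1-\Pi^U_t)\,\ud\hat W^U_t$ integrates to a local martingale, while the finite-variation part produces two contributions: the operator $\frac{\rho^2}{2}\pi^2(1-\pi)^2\hat V_{\pi\pi} - r\hat V$ applied along the path (which is $\leq 0$ by \eqref{LV}), and the term $\hat V_u\,\ud U^c_t$ from the (continuous part of the) control together with the summed jump increments $\hat V(U_{s},\Pi^U_{s}) - \hat V(U_{s-},\Pi^U_{s-})$. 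Using $\hat V_u \leq k - \pi$ for the continuous part and the mean-value/fundamental-theorem estimate $\hat V(U_s,\Pi^U_s) - \hat V(U_{s-},\Pi^U_s) \leq (k-\Pi^U_s)(U_s - U_{s-})$ for the jumps (both bounded above by $(k-\Pi^U_s)\,\ud U_s$), I would obtain
\[
e^{-rt}\hat V(U_t,\Pi^U_t) + \int_0^t e^{-rs}(\Pi^U_s - k)\,\ud U_s \leq \hat V(u,\pi) + (\text{local mart.}).
\]
After a localization argument and taking $t \to \infty$ (controlling $e^{-rt}\hat V(U_t,\Pi^U_t) \to 0$ via boundedness/growth of $\hat V$ and $\rho$), taking $\P^U$-expectations and then the supremum over $U$ gives $V(u,\pi) \leq \hat V(u,\pi)$.

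\textbf{Lower bound.} For the specific control $\hat U$ constructed in \eqref{Uhat}, the same It\^o expansion holds with equality in all the inequalities: below the boundary \eqref{LV} is an equality by construction, $\hat U$ increases only on the boundary $\{\Pi^{\hat U} = b(\hat U)\} = \{\hat V_u = k - \Pi^{\hat U}\}$ so the drift term $\hat V_u \,\ud\hat U$ exactly cancels against $-(\Pi^{\hat U}-k)\,\ud\hat U$, and any initial jump is handled by the construction of $\hat V$ above the boundary in \eqref{hatV}. This yields $\E^{\hat U}_\pi[\int_0^\infty e^{-rt}(\Pi^{\hat U}_t - k)\,\ud\hat U_t] = \hat V(u,\pi)$, hence $V \geq \hat V$, and combined with the upper bound gives $V = \hat V$ with $\hat U$ optimal.

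\textbf{Main obstacle.} The hardest part will be the rigorous justification of the limiting and integrability arguments rather than the algebraic cancellation: verifying that the local martingale is a genuine martingale (or at least that the localizing stopping times can be removed), controlling the transversality term $e^{-rt}\hat V(\hat U_t,\Pi^{\hat U}_t) \to 0$ as $t \to \infty$, and ensuring the Riemann--Stieltjes integral against $\ud U$ is well-defined and that the jump estimate is uniform. Since $\rho$ is bounded and $\Pi^U \in (0,1)$, $\hat V$ should enjoy linear-type bounds in suitable variables that make these estimates tractable; the boundedness of $\hat V$ on $[0,1]\times[0,1]$ (using $\hat V_u \leq k-\pi$ to control growth in $u$ and the explicit form $A(u)G(u,\pi)$ below the boundary) is the key technical input I would verify first.
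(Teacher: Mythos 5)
Your proposal is correct and follows essentially the same route as the paper's proof: Itô's formula for jump processes applied to $Y_t = e^{-rt}\hat V(U_t,\Pi^U_t) + \int_0^t e^{-rs}(\Pi^U_s-k)\,\ud U_s$, with the supermartingale inequality for arbitrary $U$ (using \eqref{LV}, $\hat V_u \leq k-\pi$ for the continuous part, and the same jump estimate) and the martingale equality for $\hat U$. The only cosmetic difference is in handling the passage to $t=\infty$: you propose localization plus a transversality argument, while the paper notes that the stochastic integral is a true martingale (since $\hat V_\pi$ and $\rho$ are bounded) and that $Y$, being a lower-bounded supermartingale (respectively bounded martingale for $\hat U$), extends to $[0-,\infty]$ directly.
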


\begin{proof}
Let $U\in\mathcal{A}_u$ be an arbitrary strategy and let
\[
Y_t:=Y^U_t:=e^{-rt}\hat V (U_t,\Pi_t^U) + \int_0^t e^{-rs}\Big(\Pi_s^U-k\Big) \,\ud U_s 
\] 
for $t\geq 0-$.
By Proposition \ref{ProphatV}, we can apply Itô's formula for semimartingales with jumps (see, e.g., \cite[Th.\ 3.1]{peskirchangeoftime}) to $Y$ and obtain 
\begin{align}\label{eq:ItoY}
\ud Y_t 
 &=  e^{-rt}\bigg( \frac{1}{2}\rho^2(U_{t-})\Pi^2_{t}(1-\Pi_t)^2\hat V_{\pi\pi}(U_{t-},\Pi_t)-r\hat V(U_{t-},\Pi_t) \bigg)\,\ud t+e^{-rt}(\Pi_t-k)\ud U_t \nonumber\\
&\hspace{12pt}+ e^{-rt} \hat V_u (U_{t-},\Pi_t)\,\ud U^c_t 
  + e^{-rt}\left(\hat V(U_t,\Pi_t) - \hat V(U_{t-},\Pi_{t})\right)+ e^{-rt}\hat V_\pi(U_{t-},\Pi_t) \,\ud\Pi_t,
\end{align}
where $U^c$ denotes the continuous part of $U$ and $\Pi:=\Pi^U$. 
Here, the first term is non-positive by Proposition~\ref{ProphatV}, and 
$\hat V_u\leq k-\pi$ gives that the next three ones are non-positive together.
Moreover, (recall \eqref{PiSDE}) 
$$\int_0^te^{-rs}\hat V_\pi(U_{s-},\Pi_s) \ud\Pi_s
=\int_0^te^{-rs}\rho(U_s)\Pi_s(1-\Pi_s)\hat V_\pi(U_{s-},\Pi_s)\ud \hat{W}^U_s$$ 
is a $(\bP^U,\bF)$-martingale since $\hat V_\pi$ and $\rho$ are bounded. Thus, the process $Y$ is a $(\bP^U,\bF)$-supermartingale on $[0-,\infty)$, and since $Y$ is lower bounded it is also a $(\bP^U,\bF)$-supermartingale on $[0-,\infty]$. It follows that
\begin{eqnarray*}
\hat V (u,\pi)  = Y_{0-} \geq \E_\pi^U\big[Y_\infty\big] =  \E_\pi^U\bigg[ \int_0^\infty e^{-rt}\big(\Pi_t^U-k\big) \,\ud U_t \bigg] .
\end{eqnarray*}
Since $U\in\mathcal A_u$ is arbitrary, it follows that $\hat V \geq V$.

To prove the reverse inequality, we consider the strategy $\hat U$ as in \eqref{Uhat} (recall also \eqref{heur} for an explicit form), which is continuous except for a potential jump at $0$. Denote $\Pi:=\Pi^{\hat U}$, so that $(\hat U,\Pi)$ always stays below the boundary $b$ at all times $t$ with $0\leq t\leq\inf\{s\geq 0:\hat U_s=1\}$. Then, by construction,
\[
\frac{1}{2}\rho^2(\hat U_t)\Pi_t^2(1-\Pi_t)^2\hat V_{\pi\pi}(\hat U_t,\Pi_t) -r\hat V(\hat U_t,\Pi_t)  = 0
\]
and 
\[
\hat V_u(\hat{U}_t,\Pi_t)\,\ud \hat U^c_t  = (k-\Pi_t)\,\ud \hat U^c_t.
\]
Moreover, at $t=0$, if the initial point $(u,\pi)$ satisfies $u<h(\pi)$, there occurs an initial and bounded jump in $\hat U$ of size $\ud \hat U_0 = h(\pi)-u$, but no additional jumps occur. Since $\hat V(u,\pi)=\hat V(h(\pi),\pi)+(\pi-k)(h(\pi)-u)$ for $u<h(\pi)$, we have that 
\[e^{-rt}(\Pi_t-k)\ud \hat U_t 
+ e^{-rt} \hat V_u (\hat U_{t},\Pi_t)\,\ud \hat U^c_t 
  + e^{-rt}\left(\hat V(\hat U_t,\Pi_t) - \hat V(\hat U_{t-},\Pi_{t})\right)=0.\]
Thus, by Itô's formula, the process $Y=Y^{\hat U}$ is a $(\bP^{\hat U},\bF)$-martingale. Since it is bounded, it is a $(\bP^{\hat U},\bF)$-martingale also on $[0-,\infty]$. It follows that
\begin{eqnarray*}
\hat V (u,\pi) = Y_0^{\hat U} = \E_\pi^{\hat{U}}\big[Y_\infty\big] = \E_\pi^{\hat{U}}\bigg[\int_0^\infty e^{-rt}\left(\Pi_t-k\right)\,\ud \hat U_t \bigg].
\end{eqnarray*}
Consequently, $\hat V \leq  V$.

Combining the two inequalities, it follows that $V\equiv \hat V$, and $\hat U$ is optimal in \eqref{V}.
\end{proof}

\begin{remark}
    Theorem \ref{main} shows that we can determine the solution to our problem when $b$ is increasing and either $b(u)\geq k$ for every $u\in[0,1]$ or \eqref{cond1} holds. Whether these conditions are satisfied depend on the form of the signal-to-noise ratio $\rho$ and in Section~\ref{sec5} we have obtained some sufficient conditions that satisfy the hypotheses of Theorem~\ref{main} (recall, e.g., Corollary \ref{cor:gammaconc} and Proposition~\ref{prop:Bincreas}). In the next section we will provide some specific forms of $\rho$ that fulfill the aforementioned conditions and, in particular, we will show that there are some choices of $\rho$ under which \eqref{cond1} holds but $b(u)< k$ for every $u\in[0,u_0)$ and some $u_0\in(0,1)$ (see Figure \ref{fig:b<k}).
\end{remark}

\section{Examples}\label{sec7}

In this section we provide a few 
examples for our model.

\begin{example}\label{subsection:cashflows} \textbf{(Relation to a problem with observable noisy cash flows).} Assume that a project 
pays during a time interval $[t,t+dt)$ an investor with investment level $U_t$ the amount $U_t \ud Y_t$, where 
\[\ud Y_t=\mu \,dt +\sigma\,\ud B_t\]
represents the profit per total size of the project. Here $\mu$ is a random variable with a two-point distribution on $\{\mu_0,\mu_1\}$ that is independent of the Brownian motion $B$, and $\sigma$ is a positive constant. 

To introduce the learning-by-doing feature, the investor with investment level $U_t$ is able to decompose 
the Brownian motion $B$ as
\[\ud B_t=f(U_t) \,\ud W_t + \sqrt{1-f^2(U_t)}\,\ud W^{obs}_t,\]
where $W$ and $W^{obs}$ are independent Brownian motions, with $W^{obs}$ being observable for the decision-maker, and $f:[0,1]\to[0,1)$ is a decreasing function with $f(0)=1$. Then, the signal-to-noise ratio becomes 
\[\rho(U_t)=\frac{\mu_1-\mu_0}{\sigma f(U_t)}.\]
The total value associated with an investment strategy $U$, given $U_0=0$, is then
\begin{eqnarray*}
\E\left[\int_0^\infty e^{-rt}U_t\,\ud Y_t\right]
= \E\left[\int_0^\infty e^{-rt}\mu U_t\,\ud t\right] = \frac{1}{r}\E\left[\int_0^\infty e^{-rt}\mu \,\ud U_t\right],
\end{eqnarray*}
where the second equality uses integration by parts.
We note that the last expression coincides (up to a multiplicative factor $r^{-1}$) with the formulation in \eqref{formulation}, where in this case 
\[\ud X_t:=\frac{1}{\sigma f(U_t)}\left(\ud Y_t-\mu_0\ud t-\sigma\sqrt{1-f^2(U_t)}\ud W^{obs}\right)\]
is in the form \eqref{eq:startX} with $\rho(U_t)=\frac{\mu_1-\mu_0}{\sigma f(U_t)}$. 
\end{example}

\begin{example}\label{ex:ProjExp} {\bf (Project expansion).}
In this example we discuss a simplistic model for project expansion. 
To do that, assume that a decision-maker 
runs a business with unknown value $\mu$ and has access to noisy observations 
described by
\[\ud X^0_t=\rho\theta\,\ud t+\ud W^0_t,\]
where $\rho> 0$ is a given constant, 
$\theta = (\mu-\mu_0)/(\mu_1-\mu_0)$ and $W^0$ is a Brownian motion.
Moreover, assume that the decision-maker has the possibility to expand his/her activities by starting another identical business, but with independent noise. Thus, in addition to $\ud X^0_t$, observations of
\[\ud X^1_t =\rho\theta\,\ud t+\ud W^1_t\]
become available after expansion, 
where $W^1$ is a Brownian motion independent of $W^0$.
Note that the drifts contain the same random factor $\theta$, and thus the learning rate is larger after expansion. More specifically, observing $\ud X^0_t$ and $\ud X^1_t$ provides the same information as observing $\ud X_t:=\ud X^0_t+\ud X^1_t=2\rho\theta\ud t+ \sqrt 2\ud W_t$, where $W$ is a Brownian motion.
Consequently, the signal-to-noise ratio 
increased from $\rho$ to $\sqrt 2\rho$ after expansion.

In a continuous setting, the above example generalizes to a signal-to-noise ratio $\rho(u)=C\sqrt{u}$. It is straightforward to check that \eqref{cond1} holds for this $\rho$, so Theorem~\ref{main} applies. 
\end{example}

\begin{example}{\bf (Inversely linear signal-to-noise ratio}).
For an illustration of the optimal reflecting boundary, we consider a special case in which 
the squared signal-to-noise ratio $\rho^2(u)$ is inversely linear, i.e.,
\[
\rho(u)=\sqrt{\frac{D_1}{1-D_2u}}
\]
for some constants $D_1>0$ and $D_2\in(0,1)$. 
In this case, differentiation of \eqref{gamma} yields 
\begin{equation}
   \gamma''(u)=\frac{-2(\gamma'(u))^2}{2\gamma(u)-1}<0. 
\end{equation}
    Therefore, the ODE \eqref{ode+bc} becomes 
\begin{equation}
    \left\{\begin{array}{ll}
    b' = \frac{(3\gamma+k-2)b-(3\gamma-1)k}
{(\gamma+k-1)(\gamma-1)b-\gamma k(\gamma-b)}\cdot\frac{2\gamma'b(1-b)}{2\gamma-1}, & \quad u\in(0,1)\\
b(1)= c(1).\end{array}\right.
\end{equation}

We then have that the 0-level curve $B$ of $F$ is given by
\[B(u)=\frac{(3\gamma-1)k}{3\gamma+k-2}>k\]
and we note that $B$ is monotone increasing with $B'>0$.
The existence of a monotone solution $b$ to \eqref{ode+bc} thus follows by Corollary \ref{cor:gammaconc}, and 
we have that  $b(u)>k$ for every $u\in[0,1]$. Therefore, Theorem \ref{main} applies and the optimality of the  solution is verified. See Figure~\ref{F:linear case} for a plot of the optimal boundary $b$.
\end{example}

\begin{figure}[h]
\centering
\includegraphics[scale=0.6]{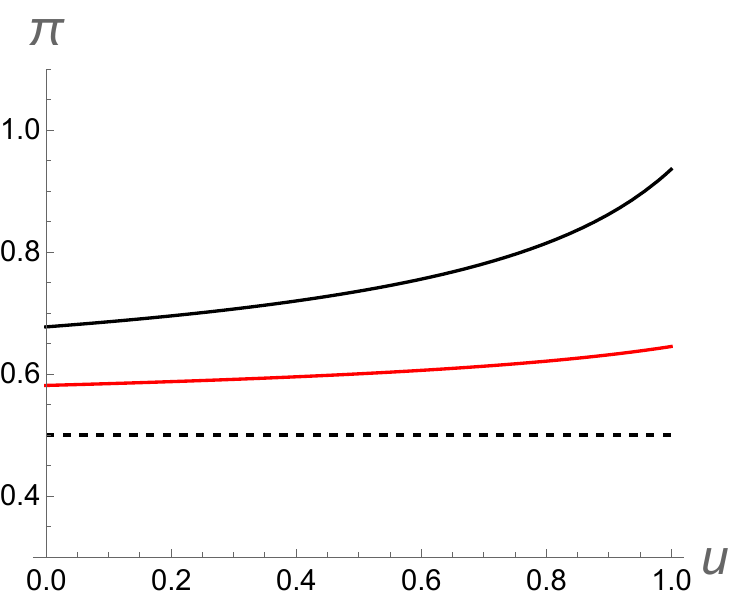}
\caption{The optimal reflecting boundary $b$ (solid black), the curve $B$ (red) and the threshold $k$ (dashed black) in the case 
$\rho^2(u)=\frac{1}{4(1-0.9u)}$, $k=0.5$ and $r=0.1$.
} 
\label{F:linear case}
\end{figure}

We next provide an example in which the optimal investment boundary $b$ goes below the level $k$. Notice, from \eqref{V}, that increasing the level of investment when $\Pi$ is below $k$ yields an instantaneous negative reward. The decision-maker should thus sometimes expand the project even though the current estimate of the project value is negative. 

\begin{example}{\bf ($\bm{b(u)<k}$).}
Consider the case
$$\gamma(u) = \frac{1.25}{u + 0.2}$$
for $u\in[0,1]$. It can be verified that $\gamma$ satisfies \eqref{cond1}. 
Hence $b'(u)>0$ for all $u\in[0,1]$, and Theorem \ref{main} applies. Figure \ref{fig:b<k} shows that the optimal boundary $b$ goes below the threshold $k$ for small values of $u$. 
This is remarkable since it corresponds to an instantaneous negative reward for such values (recall \eqref{V}). The explanation for this seemingly irrational behaviour is that the negative reward is compensated by a comparatively large value of future learning due to an increased learning rate.

\end{example}

\begin{figure}[ht]
\centering
\includegraphics[scale=0.6]{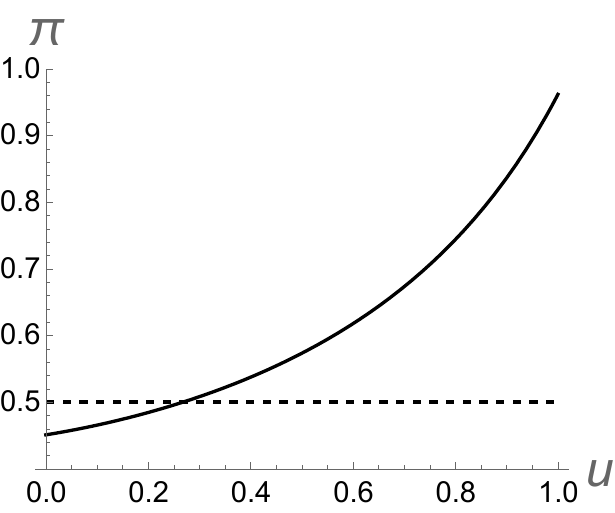}
\caption{The optimal reflecting boundary $b$ (solid black) and the threshold $k$ (dashed black) in the case $\gamma(u) = 1.25/(u + 0.2)$ and $k=0.5$. } 
\label{fig:b<k}
\end{figure}

\section{The discrete case}\label{sec8}
   
In this section we study a similar problem of irreversible investment under incomplete information and learning-by-doing, but where the control $U$ is restricted to take values in a discrete subset of $[0,1]$. In this setting, we analyze under what conditions the related investment boundary is monotone  in the number of remaining exercise rights, and we 
characterize the optimal strategy.
The analysis of the discrete case mirrors, and complements, the 
continuous version of the problem of irreversible investment, as presented above.

To introduce the problem, let an integer $N\geq 0$ be given, and define $u_n=\frac{n}{N}$ for $n=0,...,N$. We study the following recursively defined problem: 
\begin{equation}\label{discreteVn-1}
    \begin{cases}
        &V_N(\pi) = \sup_{\tau} \E_\pi ^{u_N}\Big[e^{-r\tau}\left(\Pi^{u_N}_\tau-k \right) \Big],\\
        & V_{n}(\pi) = \sup_{\tau} \E _\pi^{u_n}\Big[e^{-r\tau}\left(V_{n+1} (\Pi^{u_n}_\tau)+\Pi^{u_n}_\tau-k \right)\Big], \qquad n=0,...,N-1.
    \end{cases}
\end{equation}

\begin{remark}
The recursively defined optimization problem \eqref{discreteVn-1} is a discrete version of the continuous formulation \eqref{V}.
Indeed, if the set of admissible controls $\A$ is further restricted to take values only in $\{u_n\}_{n=0}^N$, then problem \eqref{V} reduces to 
a multiple stopping problem. 
By standard literature on Markovian multiple stopping problems (see, e.g., \cite{CD}), such problems can be formulated recursively as in \eqref{discreteVn-1}.
\end{remark}

As in the continuous case treated above, we first construct candidate solutions $\hat V_n(\pi)$, $n=0,...,N$, and we then verify that $\hat V_n=V_n$. The candidate solution is constructed using an Ansatz that there exists an increasing sequence $\{b_n\}_{n=0}^N$ such that 
\[\tau_{n}:=\inf\{t\geq 0:\Pi^{u_n}_t\geq b_n\}\]
is optimal for $V_n$. The candidate solutions will be described using the notation 
\[G_n(\pi):=G(u_n,\pi),\]
where $G$ is as in \eqref{G} with 
$\gamma=\gamma_n:=\gamma(u_n)$.
We also let 
\[c_n:=c(u_n)=\frac{\gamma_n k}{\gamma_n+k-1}.\]

\subsection{Solving the discrete problem}\label{sec8.1}

First consider the last step, i.e., the stopping problem 
\[V_N(\pi) = \sup_{\tau} \E_\pi ^{u_N}\Big[e^{-r\tau}\left(\Pi^{u_N}_\tau-k \right) \Big].\]
This is a payoff of call option type
on the process $\Pi^{u_N}$, and was already treated in Subsection~\ref{subsecconstant}.
In fact, 
\[ V_N(\pi)=\left\{\begin{array}{cl}
A_NG_N(\pi) & \pi<b_N\\
\pi-k & \pi\geq b_N,\end{array}\right.\]
where 
\[b_N=\frac{\gamma_N k}{\gamma_N+k-1}=:c_N\]
and $A_N$ is a constant.



Next we treat the case $n=0,\ldots, N-1$ using induction. Assume that there are points $b_{n+1}\leq b_{n+2}\leq ...\leq b_{N}$ such that
\begin{equation}\label{Vm}
V_{m}(\pi)=\left\{\begin{array}{cl}
    A_{m}G_{m}(\pi) & \pi<b_{m}\\
    \pi-k+V_{m+1}(\pi) & \pi\geq b_{m},\end{array}\right. 
\end{equation} 
for $m=n+1,...,N$, where $V_{N+1}\equiv 0$.
Also assume that
\begin{equation}
\label{bm}
(\gamma_{m}+k-1)b_{m}-\gamma_{m}k+
(\gamma_{m}-\gamma_{m+1})V_{m+1}(b_{m})=0.
\end{equation}

\begin{remark}\label{rem}
Equation \eqref{bm} holds for $m=N$ with $b_N=c_N$, and for $m\leq N-1$ it is a consequence of the so-called smooth-fit condition $V'_m(b_m)=1+V'_{m+1}(b_m)$ (cf., for example, \cite{PS}) and the assumed monotonicity of the boundary. Indeed, the smooth-fit condition at $b_{m}$ gives the equation system 
\begin{equation}
        \left\{\begin{array}{l}
    A_{m}G_{m}( b_{m}) = b_{m}-k +V_{m+1}(b_{m}) \\
    A_{m}G'_{m}(b_{m}) =1+ V'_{m+1}(b_{m}) ,
    \end{array}\right.
\end{equation}
which yields
\begin{equation}\label{bndeqnm}
(\gamma_{m}+k-1)b_{m}-\gamma_{m}k=
b_{m}(1-b_{m})V'_{m+1}(b_{m})-(\gamma_{m}-b_{m})V_{m+1}(b_{m}).
\end{equation}
Now, if $b_{m}\leq b_{m+1}$, then 
\[V'_{m+1}(b_{m})=\frac{\gamma_{m+1}-b_{m}}{b_{m}(1-b_{m})}V_{m+1}(b_{m}),\]
and \eqref{bndeqnm} reduces to \eqref{bm}.
\end{remark}

We now provide conditions under which also $V_n$ has the form 
 \begin{equation}\label{bn}
    \hat V_{n}(\pi)=\left\{\begin{array}{cl}
    A_{n}G_{n}(\pi) & \pi<b_{n}\\
    \pi-k+V_{n+1}(\pi) & \pi\geq b_{n},\end{array}\right. \end{equation}
where the boundary point $b_n$ satisfies \eqref{bN-1} below (which is \eqref{bm} with $m=n$).
To do that, first note that if there exists a boundary point $b_n$ as in \eqref{bn}, then 
the smooth-fit condition reads
\begin{equation}
        \left\{\begin{array}{l}
    A_{n}G_{n}( b_{n}) = b_{n}-k +V_{n+1}(b_{n}) \\
    A_{n}G'_{n}(b_{n}) =1+ V'_{n+1}(b_{n}) ,
    \end{array}\right.
\end{equation}
which reduces (as in Remark~\ref{rem}) to 
\begin{equation}
\label{bN-1}
(\gamma_{n}+k-1)b_{n}-\gamma_{n}k+
(\gamma_{n}-\gamma_{n+1})V_{n+1}(b_{n})=0,
\end{equation}
provided $b_n\leq b_{n+1}$.
Denote
\begin{equation}\label{fn}
f_{n}(b):= (\gamma_{n}+k-1)b-\gamma_{n}k+ (\gamma_{n}-\gamma_{n+1})V_{n+1}(b),
\end{equation}
and let similarly 
\[f_{m}(b):= (\gamma_{m}+k-1)b-\gamma_{m}k+ (\gamma_{m}-\gamma_{m+1})V_{m+1}(b)\]
for $m=n+1,...,N$ so that $f_m(b_m)=0$.
Note that $f_{n}$ is convex, with $f_{n}(0)=-\gamma_{n} k<0$ and 
\[f_{n}(c_{n})=(\gamma_{n}-\gamma_{n+1})V_{n+1}(c_{n})>0.\]
Consequently,
there exists a unique $b_{n}\in(0,c_{n})$ such that $f_n(b_n)=0$, which defines $b_n$. 

\begin{remark}
We emphasize that the derived form of $f_n$ (as given in \eqref{fn}) uses that $b_n\leq b_{n+1}$; in particular, if the solution $b_n$ of $f_n(b_n)=0$ satisfies $b_n>b_{n+1}$, then the smooth-fit condition at $b_n$ is not guaranteed.
We also note that monotonicity of the boundary (i.e., $b_n\leq b_{n+1}$) is equivalent to 
$f_{n}(b_{n+1})\geq 0$.
\end{remark}


\begin{proposition}\label{generalverifdisc}
    Assume that $b_n\leq b_{n+1}$.
    Then, $\hat V_{n} = V_{n}$.
\end{proposition}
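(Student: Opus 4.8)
The plan is to prove the verification statement $\hat V_n = V_n$ by the standard two-sided argument in optimal stopping, exactly mirroring the continuous verification in Theorem~\ref{main} but now adapted to the single-step stopping problem \eqref{discreteVn-1}. The induction hypothesis guarantees that $V_{n+1}$ is already known and has the form \eqref{Vm}, that $b_{n+1}\leq\cdots\leq b_N$, and by assumption $b_n\leq b_{n+1}$, so that the candidate $\hat V_n$ in \eqref{bn} with $b_n$ the unique root of $f_n$ in $(0,c_n)$ is well-defined. The goal is to show that $\hat V_n$ dominates the payoff of every stopping time while being attained by $\tau_n=\inf\{t\geq 0:\Pi^{u_n}_t\geq b_n\}$.

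First I would verify the regularity and the variational inequalities satisfied by $\hat V_n$. By the smooth-fit construction \eqref{bN-1}, $\hat V_n$ is $C^1$ across $b_n$, and since it solves the ODE $\tfrac{\rho^2(u_n)}{2}\pi^2(1-\pi)^2\hat V_{\pi\pi}-r\hat V=0$ below $b_n$ and equals $\pi-k+V_{n+1}(\pi)$ above $b_n$, it is piecewise $C^2$, which suffices for the Itô argument. I would then check the two inequalities needed for a variational characterization: that $\hat V_n(\pi)\geq \pi-k+V_{n+1}(\pi)$ everywhere (the obstacle inequality), and that $\tfrac{\rho^2(u_n)}{2}\pi^2(1-\pi)^2(\hat V_n)_{\pi\pi}-r\hat V_n\leq 0$ in the continuation region $\pi\geq b_n$. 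The obstacle inequality below $b_n$ follows from convexity of $f_n$ together with the sign and slope conditions at $b_n$ (as in the continuous case via \eqref{eq:V_upi}-type reasoning); above $b_n$ it holds with equality. The supersolution inequality above $b_n$ uses that $V_{n+1}$ is itself a solution/supersolution of the corresponding operator with $\gamma_{n+1}$, combined with $\rho(u_n)\leq\rho(u_{n+1})$ and $c_n\geq k$, in the same spirit as the concluding estimate in Proposition~\ref{ProphatV}.

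With these inequalities in hand, the verification proceeds by applying Itô's formula to $e^{-rt}\hat V_n(\Pi^{u_n}_t)$ under $\P^{u_n}$. For an arbitrary stopping time $\tau$, the drift term is non-positive by the supersolution inequality, and the stochastic integral (against the innovations Brownian motion, using boundedness of $\rho$ and of $(\hat V_n)_\pi$ on the relevant domain) is a martingale; taking expectations and using the obstacle inequality at $\tau$ gives $\hat V_n(\pi)\geq \E^{u_n}_\pi[e^{-r\tau}(V_{n+1}(\Pi^{u_n}_\tau)+\Pi^{u_n}_\tau-k)]$, hence $\hat V_n\geq V_n$. For the reverse inequality I would take $\tau=\tau_n$: on $[0,\tau_n)$ the process stays below $b_n$ where the operator vanishes, so the drift term is identically zero and $e^{-rt}\hat V_n(\Pi^{u_n}_t)$ is a genuine martingale up to $\tau_n$; at $\tau_n$ the obstacle inequality holds with equality, giving $\hat V_n\leq V_n$. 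Combining yields $\hat V_n=V_n$ with $\tau_n$ optimal.

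The main obstacle I expect is the supersolution inequality in the continuation region $\pi\geq b_n$, since there $\hat V_n=\pi-k+V_{n+1}$ and one must control $\tfrac{\rho^2(u_n)}{2}\pi^2(1-\pi)^2 (V_{n+1})_{\pi\pi}-r(\pi-k+V_{n+1})$ using only that $V_{n+1}$ satisfies the analogous relation with the \emph{larger} rate $\rho(u_{n+1})$; the sign must be argued piecewise depending on whether $\pi$ lies below or above $b_{n+1}$, and this is precisely where the assumption $b_n\leq b_{n+1}$ is used to keep the structure of the regions compatible. A secondary technical point is confirming integrability so that $Y_\tau$ and $Y_\infty$ behave well and the martingale/supermartingale passage to the limit is justified; this is handled exactly as in the proof of Theorem~\ref{main}, using boundedness of $\hat V_n$ and of the integrand in the stochastic integral.
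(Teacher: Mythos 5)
Your overall strategy is the same as the paper's: establish the obstacle inequality $\hat V_n\geq V_{n+1}+\pi-k$ and the supersolution inequality $\tfrac{\rho_n^2}{2}\pi^2(1-\pi)^2(\hat V_n)''-r\hat V_n\leq 0$ (writing $\rho_n:=\rho(u_n)$), then run the two-sided It\^o/supermartingale verification; your treatment of the obstacle inequality, via the sign of $f_n$ below $b_n$ and the logarithmic-derivative identities for $G_n,G_{n+1}$, is essentially the paper's argument. The gap is in the step you yourself flag as the main obstacle, and the patch you propose does not close it. You claim the supersolution inequality on $\{\pi>b_n\}$ follows from $\rho(u_n)\leq\rho(u_{n+1})$ together with $c_n\geq k$, ``in the same spirit as the concluding estimate in Proposition~\ref{ProphatV}''. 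But that estimate in Proposition~\ref{ProphatV} requires either the boundary to lie above $k$ or the structural condition \eqref{cond1}, and neither is assumed here: the present proposition assumes only $b_n\leq b_{n+1}$, and $b_n<k$ is a genuine possibility (this is exactly the regime the paper emphasizes as interesting). When $b_n<k$, for $\pi$ just above $b_n$ one has, for $\pi\neq b_{n+1}$,
\[
\tfrac{\rho_n^2}{2}\pi^2(1-\pi)^2(\hat V_n)''-r\hat V_n
=\tfrac{\rho_{n}^2}{\rho_{n+1}^2}\Bigl(\tfrac{\rho_{n+1}^2}{2}\pi^2(1-\pi)^2V_{n+1}''-rV_{n+1}\Bigr)-r\Bigl(1-\tfrac{\rho_{n}^2}{\rho_{n+1}^2}\Bigr)V_{n+1}-r(\pi-k),
\]
where the last term is strictly positive; knowing only $V_{n+1}\geq 0$, $\rho_n\leq\rho_{n+1}$ and $c_n\geq k$ gives no control on whether the middle term compensates it. Your suggested case split at $b_{n+1}$ does not help either: the difficulty sits at $\pi$ near $b_n$, i.e.\ below $b_{n+1}$, where the bracketed term vanishes and the case distinction yields nothing further.

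The missing ingredient is quantitative and is precisely the defining equation of the boundary, $f_n(b_n)=0$ from \eqref{fn}, equivalently \eqref{bN-1}. The paper first uses that $V_{n+1}$ and $\pi\mapsto\pi-k$ are nondecreasing to bound the right-hand side above by its value at $\pi=b_n$, namely $-r\bigl((1-\rho_n^2/\rho_{n+1}^2)V_{n+1}(b_n)+b_n-k\bigr)$, and then substitutes $(\gamma_n-\gamma_{n+1})V_{n+1}(b_n)=\gamma_nk-(\gamma_n+k-1)b_n$; an elementary estimate (using $\gamma_{n+1}>1$) turns this into
\[
\tfrac{\rho_n^2}{2}\pi^2(1-\pi)^2(\hat V_n)''-r\hat V_n\leq -r\,\frac{\gamma_{n+1}k-(\gamma_{n+1}+k-1)b_n}{\gamma_n-1}\leq 0,
\]
where the final inequality holds because $b_n<c_n\leq c_{n+1}$ --- with no sign condition on $b_n-k$ whatsoever. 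Without invoking $f_n(b_n)=0$ at this point, the inequality cannot be established in the case $b_n<k$, so as written your proof is incomplete; the remainder of your outline (regularity, the obstacle inequality, and the It\^o verification with $\tau_n$) is sound and coincides with the paper's.
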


\begin{proof}
For the verification of $\hat V_{n}= V_{n}$, we first check that 
\begin{equation}\label{cond1_disc}
\hat V_{n}(\pi)\geq V_{n+1}(\pi) + \pi-k.
\end{equation}
Equation \eqref{cond1_disc} holds automatically (with equality) for $\pi\geq b_{n}$. 
To see that it holds also below $b_n$, assume that 
$\hat V_{n}(\pi_0)< V_{n+1}(\pi_0) + \pi_0-k$ for some $\pi_0<b_{n}$. We then have that
\begin{align*}
 \hat V_{n}'(\pi_0)-V_{n+1}'(\pi_0)-1 &= \frac{\gamma_{n}-\pi_0}{\pi_0(1-\pi_0)}\hat V_{n}(\pi_0)- \frac{\gamma_{n+1}-\pi_0}{\pi_0(1-\pi_0)}V_{n+1}(\pi_0)-1\\
 &< \frac{\gamma_{n}-\gamma_{n+1}}{\pi_0(1-\pi_0)}V_{n+1}(\pi_0)+ \frac{(\gamma_{n}-\pi_0)(\pi_0-k)-\pi_0(1-\pi_0)}{\pi_0(1-\pi_0)}\\
 &= \frac{1}{\pi_0(1-\pi_0)}f_{n}(\pi_0)\leq 0,
\end{align*}
where the last inequality follows from $\pi_0\leq b_{n}$. Thus, if $\hat V_{n}(\pi_0)< V_{n+1}(\pi_0) + \pi_0-k$ at some point $\pi_0<b_{n}$, then $\hat V_{n}(\pi)- V_{n+1}(\pi) - (\pi-k)$ is decreasing for $\pi\in[\pi_0,b_{n}]$, which contradicts the relation $\hat V_{n}(b_{n})= V_{n+1}(b_{n}) + b_{n}-k$; consequently, \eqref{cond1_disc} holds.

To complete a verification argument, we also need 
\begin{equation}
    \label{cond2_disc}
 \mathcal L_{n}\hat V_{n}:=   \frac{\rho_{n}^2}{2}\pi^2(1-\pi)^2\hat V_{n}''-r\hat V_{n}\leq 0
\end{equation}
for $\pi\not\in \{b_{n},b_{n+1}\}$. 
The inequality \eqref{cond2_disc} holds automatically (with equality) for $\pi< b_{n}$, so we only need to check it above $b_{n}$. 
For $\pi>b_{n}$ we have $\hat V_{n}=V_{n+1}+\pi-k$, so
\begin{align*} 
\mathcal L_{n}\hat V_{n}&=
\tfrac{\rho_{n}^2}{2}\pi^2(1-\pi)^2 V_{n+1}''-r V_{n+1}-r(\pi-k)\\
&=  \tfrac{\rho_{n}^2}{\rho_{n+1}^2}\mathcal L_{n+1} V_{n+1}-r\Big(1-\tfrac{\rho_{n}^2}{\rho^2_{n+1}}\Big)V_{n+1}-r(\pi-k)\\
&\leq  -r\Big(1-\tfrac{\rho_{n}^2}{\rho^2_{n+1}}\Big)V_{n+1}-r(\pi-k),
\end{align*}
provided $\pi\not= b_{n+1}$ (and where $\mathcal L_{n+1}V_{n+1}:=\frac{\rho_{n+1}^2}{2}\pi^2(1-\pi)^2V_{n+1}''-rV_{n+1}\leq 0$). 
Since both $V_{n+1}$ and $\pi\mapsto \pi-k$ are increasing, we note that
\begin{align*} 
\mathcal L_{n}\hat V_{n}(\pi) &\leq -r\bigg(\Big(1-\tfrac{\rho_{n}^2}{\rho^2_{n+1}}\Big)V_{n+1}(b_{n})+b_{n}-k\bigg)\\
&= -r\Big(\tfrac{\gamma_{n}+\gamma_{n+1}-1}{\gamma_{n}^2-\gamma_{n}}(\gamma_{n}-\gamma_{n+1}) V_{n+1}(b_{n})+b_{n}-k\Big).
\end{align*}
Using $f_{n}(b_{n})=0$, we have
\begin{align*}
(\gamma_{n}-\gamma_{n+1}) V_{n+1}(b_{n}) &=
\gamma_{n}k-(\gamma_{n}+k-1)b_{n}
\end{align*}
and so
\begin{align*} 
\mathcal L_{n}V_{n}(\pi) &\leq
-r\left(\frac{\gamma_{n+1}(k-b_{n})}{\gamma_{n}-1}  +\frac{\gamma_{n}+\gamma_{n+1}-1}{\gamma_{n}^2-\gamma_{n}}(1-k)b_{n} \right)\\
&\leq -r\left(\frac{\gamma_{n+1}k-(\gamma_{n+1}+k-1)b_{n}}{\gamma_{n}-1} \right)\leq 0,
\end{align*}
since $b_{n}\leq c_{n}\leq c_{n+1}$. Consequently, \eqref{cond2_disc} holds for all $\pi\not\in\{b_{n},b_{n+1}\}$. Using \eqref{cond1_disc} and \eqref{cond2_disc}, a standard verification procedure shows that $\hat V_{n} \equiv V_{n}$. 
\end{proof}

Proposition~\ref{generalverifdisc} completes the inductive construction and verification of the value function $V_n$. As remarked above, however, the construction depends on the assumption $b_n\leq b_{n+1}$. In the next subsection we provide conditions under which the boundary is indeed monotone.

\subsection{Monotonicity of the boundary}
First note that 
\[f_{N-1}(c_{N-1})=(\gamma_{N-1}-\gamma_N)V_N(c_{N-1})>0,\] 
so $b_{N-1}\in (0,c_{N-1})$. Since $c_{N-1}<c_N=b_N$, we automatically have $b_{N-1}\leq b_N$.

Now, for $n\in\{0,...,N-2\}$, assume that $b_{n+1}\leq...\leq b_N$ have been found such that \eqref{Vm} and \eqref{bm} hold for $m=n+1,...,N$.
We then have
\begin{equation}\label{discvn+1}
    V_{n+1} (b_{n+1}) = b_{n+1}-k+V_{n+2}(b_{n+1}),
\end{equation}
and from $f_{n+1}(b_{n+1})=0$ we get
\begin{equation}\label{discvn+2}
    V_{n+2}(b_{n+1})= \frac{\gamma_{n+1}k-(\gamma_{n+1}+k-1)b_{n+1}}{\gamma_{n+1}-\gamma_{n+2}}.
\end{equation}
From the identities \eqref{discvn+1} and 
\eqref{discvn+2}, we thus obtain
\begin{equation}\label{discfnbn+1}
f_{n}(b_{n+1}) =2(b_{n+1}-k)(\gamma_{n}-\gamma_{n+1})+
\frac{(\gamma_{n+1}k-(\gamma_{n+1}+k-1)b_{n+1})(\gamma_{n}-2\gamma_{n+1}+\gamma_{n+2})}{\gamma_{n+1}-\gamma_{n+2}}.
\end{equation}

\begin{proposition}
    If
    \begin{equation}\label{discmonotone_cond1}
        2(\gamma_{n}-\gamma_{n+1})(\gamma_{n+1}-\gamma_{n+2})-(\gamma_{n}-2\gamma_{n+1}+\gamma_{n+2})\gamma_{n+1} \leq 0,
    \end{equation}
    then $b_{n}\leq b_{n+1}$.
  \end{proposition}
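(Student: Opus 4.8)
The plan is to reduce the claimed monotonicity to a single sign condition and then verify that condition via an affine–endpoint argument. Recall that $f_n$ is convex with $f_n(0)=-\gamma_n k<0$ and admits a unique zero $b_n\in(0,c_n)$; consequently $f_n<0$ on $[0,b_n)$ and $f_n\geq 0$ on $[b_n,\infty)$, so that $b_n\leq b_{n+1}$ holds if and only if $f_n(b_{n+1})\geq 0$ (this is the equivalence already recorded in the remark just before Proposition~\ref{generalverifdisc}). Hence the whole task reduces to proving $f_n(b_{n+1})\geq 0$ under hypothesis \eqref{discmonotone_cond1}.

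I would start from the closed form \eqref{discfnbn+1} for $f_n(b_{n+1})$. Writing $\delta_1:=\gamma_n-\gamma_{n+1}>0$ and $\delta_2:=\gamma_{n+1}-\gamma_{n+2}>0$ (positivity because $\gamma$ is decreasing), and using $\gamma_n-2\gamma_{n+1}+\gamma_{n+2}=\delta_1-\delta_2$, I multiply \eqref{discfnbn+1} by $\delta_2>0$ to obtain
\[
\delta_2\,f_n(b_{n+1})=2\delta_1\delta_2\,(b_{n+1}-k)+(\delta_1-\delta_2)\big(\gamma_{n+1}k-(\gamma_{n+1}+k-1)b_{n+1}\big).
\]
The crucial structural observation is that the right-hand side is an \emph{affine} function of $b_{n+1}$; denote it $g(b_{n+1})$. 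Since $b_{n+1}\in[0,c_{n+1}]$ and an affine function on a closed interval attains its minimum at an endpoint, it suffices to check $g(0)\geq 0$ and $g(c_{n+1})\geq 0$.

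Both endpoint evaluations are short, and it is exactly here that the hypothesis enters. At the lower endpoint, $g(0)=k\big[(\delta_1-\delta_2)\gamma_{n+1}-2\delta_1\delta_2\big]$, which is nonnegative precisely because of \eqref{discmonotone_cond1}. At the upper endpoint $c_{n+1}=\frac{\gamma_{n+1}k}{\gamma_{n+1}+k-1}$ the factor $\gamma_{n+1}k-(\gamma_{n+1}+k-1)c_{n+1}$ vanishes, leaving $g(c_{n+1})=2\delta_1\delta_2\,(c_{n+1}-k)>0$, since $c_{n+1}>k$ (a consequence of $k\in(0,1)$). Therefore $g\geq 0$ on $[0,c_{n+1}]$, in particular $\delta_2 f_n(b_{n+1})=g(b_{n+1})\geq 0$; dividing by $\delta_2>0$ gives $f_n(b_{n+1})\geq 0$, hence $b_n\leq b_{n+1}$.

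The one point requiring care --- and the reason a naive term-by-term sign argument fails --- is that the summand $2\delta_1(b_{n+1}-k)$ appearing in \eqref{discfnbn+1} need not be nonnegative, because $b_{n+1}$ may fall below $k$. The affine–endpoint reduction is precisely what bypasses this difficulty: it transfers the whole question to $b_{n+1}=0$, where \eqref{discmonotone_cond1} applies directly, and to $b_{n+1}=c_{n+1}$, where the second factor conveniently vanishes and only the manifestly positive term survives.
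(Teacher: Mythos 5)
Your proof is correct and takes essentially the same route as the paper: the paper also reduces the claim to $f_n(b_{n+1})\geq 0$ via the closed form \eqref{discfnbn+1}, introduces the affine function $H_n$ (which is your $g$ up to the positive factor $\gamma_{n+1}-\gamma_{n+2}$), and checks nonnegativity at the two endpoints, with the hypothesis \eqref{discmonotone_cond1} entering at $b=0$ and the second term vanishing at $b=c_{n+1}$. The only difference is cosmetic (clearing the denominator and the $\delta_1,\delta_2$ notation), so nothing further is needed.
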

  
  \begin{proof}
  Define 
\begin{equation}\label{Hn}
H_n(b):=2(b-k)(\gamma_{n}-\gamma_{n+1})+
\frac{(\gamma_{n+1}k-(\gamma_{n+1}+k-1)b)(\gamma_{n}-2\gamma_{n+1}+\gamma_{n+2})}{\gamma_{n+1}-\gamma_{n+2}}
\end{equation}
so that $H_n(b_{n+1})=f_n(b_{n+1})$.
Then
\[
H_{n}(c_{n+1}) = 2(c_{n+1}-k)(\gamma_{n}-\gamma_{n+1}) >0 , 
\]
 and by ~\eqref{discmonotone_cond1} we have 
\[H_{n}(0) = -\frac{k}{\gamma_{n+1}-\gamma_{n+2}}\left((\gamma_{n}-\gamma_{n+1})(\gamma_{n+1}-\gamma_{n+2})-(\gamma_{n}-2\gamma_{n+1}+\gamma_{n+2})\gamma_{n+1}\right) \geq 0.
        \]
    Since $H_n$ is affine, it follows that $H_{n}(b)> 0$ for all $b\in(0,c_{n+1})$. Thus $f_n(b_{n+1})\geq 0$, and the result follows.
    \end{proof}


    

\begin{remark}
Note that the condition \eqref{discmonotone_cond1} 
is a discrete version of \eqref{cond1}. 
\end{remark}

Figure \ref{fig:discrete_case} illustrates the set of the optimal boundaries $b_n$, $n=0,\ldots,N$ for $\gamma_n$ that is a discrete version of the specification used in Figure~\ref{fig:b<k}.

\begin{figure}[H]
\centering
\includegraphics[scale=0.6]{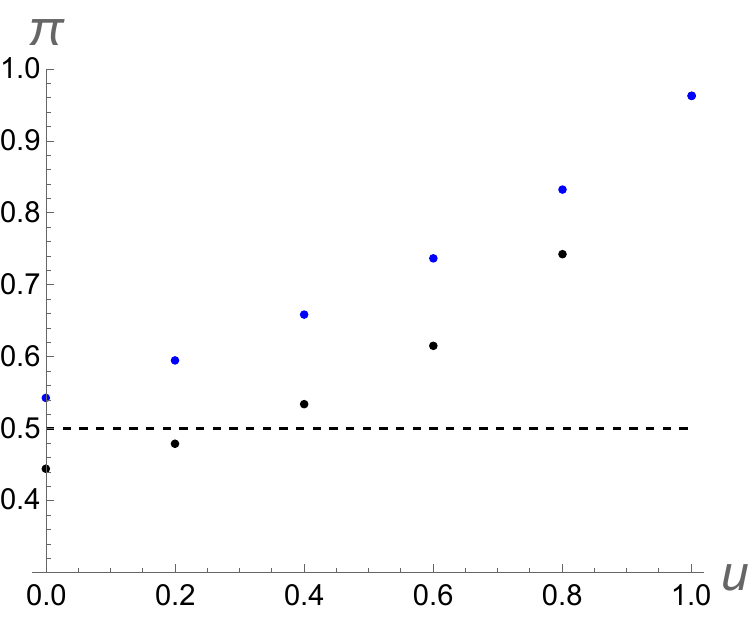}
\caption{Optimal boundaries $b_n$ (black dots), points $c_n$ (blue dots), and the threshold $k$ (black dashed) with $\gamma_n = 1.25/\left(\frac{n}{5} + 0.2\right)$. Remaining parameters are $k=0.5$  and $N=5$.
Note that $\gamma_n$ satisfies condition \eqref{discmonotone_cond1}.} 
\label{fig:discrete_case}
\end{figure}



\bibliography{bibfile}{}
	\bibliographystyle{abbrv}

\end{document}